\definecolor{Red}{rgb}{0.9,0.1,0.3}
\newcommand{\ev}[1]{\mathbb{E}{#1}}
\newcommand{\e}[1]{\mathbb{E}}
\newcommand{\norm}[3]{\Vert #1 \Vert_{ #2 } ^{ #3 }}
\newcommand{\rbr}[1]{\left( #1 \right)}
\newcommand{\sbr}[1]{\left[ #1 \right]}
\newcommand{\cbr}[1]{\left\{ #1 \right\}}
\newcommand{\ddp}[2]{\left\langle #1, #2 \right\rangle}
\newcommand{\intr}{\int_{\mathbb{R}^d}}
\newcommand{\inti}{\int_{0}^{+\infty}}
\newcommand{\intc}[1]{\int_{0}^{#1}}
\newcommand{\Rd}{\mathbb{R}^d}
\newcommand{\R}{\mathbb{R}}
\newcommand{\T}[1]{\mathcal{T}_{#1}}
\newcommand{\var}{\text{Var}}
\definecolor{czerwony}{rgb}{1,0.3,0.3}
\definecolor{zielony}{rgb}{0.1,0.8,0.4}
\newcommand{\dd}[1]{\textnormal{d}#1}
\newcommand{\SP}{\mathcal{S}'(\Rd)}
\newcommand{\SD}{\mathcal{S}(\Rd)}
\newcommand{\lap}[1]{\ev{\exp\cbr{#1}}}
\newcommand{\ceq}{\eqsim}
\newcommand{\cleq}{\lesssim}
\theoremstyle{plain} 
\newtheorem{theorem}{Theorem}[section]
\newtheorem{fact}{Fact}[section]
\newtheorem{lem}{Lemma}[section]
\newtheorem{lemma}{Lemma}[section]
\theoremstyle{remark}\newtheorem{rem}{Remark}[section]
\newtheorem*{acknowledgement*}{Acknowledgement}
\title[]{Fluctuations of the occupation times for branching system starting from infinitely divisible point processes.}
\author{Piotr Mi\l{}o\'s \\{\tiny Faculty of Mathematics, Informatics, and Mechanics,\\Banacha 2, 02-097 Warszawa, Poland\\email:pmilos@mimuw.edu.pl}}
\begin{document}
	
	\maketitle
	\begin{abstract}
		
	In the paper the rescaled occupation time fluctuation process of a certain empirical system is investigated. The system consists of particles evolving independently according to a symmetric $\alpha$-stable motion in $\Rd$, $\alpha<d<2\alpha$. The particles split according to the binary critical branching law with intensity $V>0$. We study how the \emph{limit} behaviour of the fluctuations of the occupation time depends on the \emph{initial particle configuration}. We obtain a functional central limit theorem for a vast class of infinitely divisible distributions. Our findings extend and put in a unified setting results of \cite{Bojdecki:2006ab} and \cite{Milos:2007ab}, which previously seemed to be disconnected. The limit processes form a one dimensional family of long-range dependance centred Gaussian processes. 
	\end{abstract}

	AMS subject classification: primary 60F17, 60G20, secondary 60G15\\

	Key words: Functional central limit theorem; Occupation time fluctuations; Branching
	particles systems; Generalised Wiener process, Infinitely divisible random measure, Point processes
	\pagestyle{empty}
\section{Introduction} 
\label{sec:aim}
We consider a system of particles in $\mathbb{R}^{d}$ starting off at time $t=0$ from a certain distribution (to be described later). They evolve independently, moving according to a symmetric $\alpha$-stable L\'evy process and undergoing the critical binary branching (i.e., 0 or 2 particles with probability $\frac{1}{2}$ in each case) at a rate $V>0$. We concentrate on the case when $\alpha<d<2\alpha$ and the starting measure is an \emph{infinitely divisible point process} $M$. We denote this system by $N$. It can be identified with the empirical process $\cbr{N_t}_{t\geq 0}$ which is measure-valued. For a Borel set $A$, the random variable $N_t(A)$ indicates the number of particles of the system in set $A$ at time $t$. We define the rescaled process of the fluctuations of the occupation time $\cbr{X_T(t)}_{t\in[0,1]}$ by:
\begin{equation}
 X_T(t) := \frac{1}{F_T} \intc{Tt} (N_s-\lambda) \dd{s}, \label{eq:fluctuatons_process}
\end{equation}
where $F_T := T^{(3-\frac{d}{\alpha})/2}$. The main object of our investigation is the limit of $X_T$ as the time is accelerated, $T\rightarrow +\infty$. We are interested in how the limit depends on the starting measure $M$. In our paper we derive a simple integral criterion which determines the behaviour of the limit. The exposition will be clearer  when put against the state-of-art in the field, consequently we start by recalling the most relevant results. The reader acquainted with them can proceed to the next paragraph. Studies of the fluctuations of the occupation time of branching particle systems started with pioneering papers \cite{Cox:1984aa, Cox:1985aa}. The field has received a lot of research attention recently due to the series of papers by Bojdecki at al. \cite{Bojdecki:2006aa,Bojdecki:2006ab,Bojdecki:2007aa,Bojdecki:2007ab,Bojdecki:2007ac,Bojdecki:2007ad,Bojdecki:2008aa,Bojdecki:2008xd,Bojdecki:2010bl} and latter Milos \cite{Milos:2007ab,Milos:aa,Mios:2009oq,Milos:2008aa}. An analogous problem for a system of particles on the integer lattice was studied in \cite{Birkner:2007aa}. The aforementioned papers concentrate on proving central limit theorems (usually in a functional setting) for various configurations of parameters  (i.e. various values of $d,\alpha$, branching laws and starting distributions). These findings were also complemented by corresponding large and moderate deviation principles e.g. \cite{Deuschel:1994aa,Deuschel:1998aa, Hong:2005aa,Mios:2009jl}. For a system with a critical finite variance branching law (e.g. the critical binary law) one can distinguish three main regimes of the limit behaviour:
\begin{itemize}
	\item low dimensions ($d<\alpha$), when the system suffers from local extinction and direct study of the fluctuations of the rescaled occupation time does not make sense,
	\item intermediate dimensions ($\alpha<d<2\alpha$), when the limit has a simple spatial structure (Lebesgue measure) and a complicated temporal one (with long-range dependence property),
	\item  large dimensions ($2\alpha<d$), when the limit has a complicated spatial structure ($\SP$-valued random field) and a simple temporal one (process with independent increments).
\end{itemize}
The case of the \emph{intermediate dimensions}, in which we concentrate in this paper, is interesting from a number of reasons. Let us recall results of  \cite{Bojdecki:2006ab,Milos:2007ab}. They considered two starting measures, a homogenous Poisson point process $Poiss$ and the equilibrium measure $Eq$. The latter is the unique invariant distribution of the dynamics of the branching particle system. It exists under assumption $d>\alpha$ and can also be defined by (see \cite{Gorostiza:1991aa})
\begin{equation}
	N_t \rightarrow^d Eq, \quad t\rightarrow +\infty, \label{eq:defEqulibrium}
\end{equation}
where $\rightarrow^d$ denotes the convergence in law and $N$ starts off from $Poiss$. Surprisingly, the limits of $X_T$ are \emph{different} in these cases (a so-called sub-fractional Brownian motion and a fractional Brownian motion, respectively).  A natural question that arose was about the properties of starting distributions that are responsible for different forms of the limits. It became even more interesting in light of results of \cite{Bojdecki:2010bl} (to be discussed in Remark \ref{rem:bojecki}). This paper brings an answer to this question for a vast and important class of starting distributions, viz. infinitely divisible point processes. We were able to factor out a \emph{single characteristic} of a starting distribution $M$ (named $\mathcal{H}(M)$ see (A2) in Section \ref{sub:assumptions}) which determines the limit process. Precisely, its temporal structure (up to multiplicative constants) is described by a centred Gaussian process of the form
\[
	 \xi + \mathcal{H}(Q) \zeta,
\]
where $\xi$ is a sub-fractional Brownian motion, $\zeta$ is another centred Gaussian process, $\xi$ and $\zeta$ are independent. In this way we obtained a whole spectrum of limit processes. In our framework, $Poiss$ and $Eq$ starting distributions are special cases. Moreover, it is possible to give some intuitive description of $\mathcal{H}(M)$. An infinitely divisible point process is characterised by its canonical measure. Roughly speaking, this measure defines ``clans of particles'' which arrive to the system together in groups. The condition $\mathcal{H}(M)>0$ implies that on average the size of a clan is infinite and asymptotically there are $\mathcal{H}(Q) r^\alpha $ particles in the ball of radius $r$. For the equilibrium measure these ``clans'' are exactly the clans which can be described in terms of genealogy bonds among particles (e.g. in \cite{Gorostiza:1991aa, Zahle:2002aa}). We do not know an exact explanation for such behaviour, although we expect that recurrence and persistence properties (e.g. like the one described in \cite{Stoeckl:1994aa}) play a crucial role. A clan has to be large enough so that its recurrence influences the limit.  It is also worth mentioning that the limit process exhibits long-range dependence, which is also a consequence of the recurrence properties. Finally, we return to \cite{Bojdecki:2010bl}; studying an analogous problem for particles systems without branching they obtained the same family of processes. We attempt to give an intuitive explanation of our results as well as understand the relation to the ones of \cite{Bojdecki:2010bl} in Remark \ref{rem:intuitve} and Remark \ref{rem:bojecki}.\\
The paper is organised as follows. In  Section \ref{sec:preliminaries} we gather definitions and facts used in the subsequent sections. In Section \ref{sec:result} we present the main result of the paper, Theorem \ref{thm:starting}, along with examples and comments. Sections \ref{sec:proofs} and \ref{sec:calculations} are devoted to the proof of Theorem \ref{thm:starting}. Finally, in Section \ref{sec:equlibrium} we collected some basic facts about the equilibrium distribution.   


\section{Preliminaries} 
\label{sec:preliminaries}
For the reader convenience we put a short index of symbols 
\paragraph{Symbols}\label{par:symbols}~\\  
$\mathcal{M}(\Rd)$ - space of locally finite point measures on $\Rd$ with topology of vague convergence \\
$Q_x$ - Palm measure of random measure $Q$\\
$[\mu]$ - set of atoms of a point measure $\mu$\\
$\lambda$ - Lebesgue measure\\
$\theta_x$ - shift operator\\
$\SD, \SP$ - Schwartz space of rapidly decreasing functions and its dual space of distributions\\
$\mathcal{C}([0,1], \mathcal{X})$ - the space of $\mathcal{X}$-valued continuous functions with the $\sup$ norm\\
$p_t(\cdot), \T{t}$ - density of the symmetric $\alpha$-stable motion and its semigroup\\
For a space of functions $\mathcal{X}$ by $\mathcal{X}_+$ we denote the subspace of  positive functions\\
$\ddp{\cdot}{\cdot}$ - denotes either duality or integration \\
$\ceq, \cleq$ - we use this notation when an equality or inequality holds with a constant $c>0$ which is irrelevant for calculations. E.g. $f(x)\ceq g(x)$ means that there exists a constant $c>0$ such that $f(x) = c g(x)$.
\subsection{Functional setting} 
\label{sub:functional_setting}
Process $X_T$ is a measure-valued. However, from technical reasons, it is convenient to embed it in $\SP$ i.e. the space of tempered distributions dual to the space of rapidly decreasing functions $\SD$. More precisely we will consider $X_T$ to be a $\mathcal{C}([0,1], \SP)$ random variable and prove the convergence in law with respect to the topology of this space. 
\subsection{Gaussian processes with long-range dependance } \label{sec:gaussian}
In our paper we will need two Gaussian processes. The first one is the sub-fractional Brownian motion. It was introduced in \cite{Bojdecki:2004aa} where also its properties were studied. It is a centred Gaussian process with the covariance function
\begin{equation}
	 C_h(s,t) := s^h + t^h - \frac{1}{2}\sbr{(s+t)^h + |s-t|^h},\quad h\in(0,2).  \label{eq:subfroctional}
\end{equation}
We will also need another centred Gaussian process with covariance
\begin{equation}
	c_h(s,t) := sgn(h-1)\sbr{(s+t)^h - s^h - t^h}, \quad h\in (0,2). \label{eq:covariance}
\end{equation}
As indicated in \cite{Bojdecki:2010bl}[after (2.4)] for $h\neq 1$ function $c_h$ is positive-definite since
\[
	c_h(s,t) \ceq \intc{s}\intc{t} \int_{\R} e^{-r|x|^{2-h}}e^{-r'|x|^{2-h}} \dd{x} \dd{r'}\dd{r}.   
\]	
As the limit in Theorem \ref{thm:starting} we obtain a process which is sum of two independent processes as above, viz. it is a centred Gaussian process with the covariance 
\[
	e(s,t;h,A,B) = A  C_h(s,t) + B C_h(s,t), \quad A>0, B\geq 0,
\]
where $h\in (1,2)$. We already know that for $A=1, B=0$ it is a sub-fractional Brownian motion, moreover it is trivial to check that for $A=1, B=1/2$ it is a well-know fractional Brownian motion (see e.g. \cite{Samorodnitsky:1994aa}). We denote by $\xi$ a centred Gaussian process with the covariance function $e$. Such processes exhibit a long-range dependance which expresses the fact that the correlation of increments on distant intervals decays polynomially slow. We briefly summarise basic properties of $\xi$  in the following
\begin{fact} \label{fact:decay}
	Let $h \in(1,2)$. Process $\xi$ is self-similar, meaning
	\[
		\cbr{\xi(at),t\geq 0} =^d \{a^{h/2} \xi(t),t\geq 0\},\quad \text{for each }a>0. 
	\]
	Let us also denote the correlation of increments by 
	\[
		R(u,v,s,t):= \ev{}(\xi_u - \xi_v)(\xi_s - \xi_t), \quad 0\leq u<v \leq s <t,
	\]
	we have
	\begin{equation*}
		R(u,v,s+\tau,t+\tau) \sim 
		\begin{cases}
			\tau^{h-2}& \text{ if } B>0 \\
			\tau^{h-3}& \text{ if } B=0.
		\end{cases}
	\end{equation*}
\end{fact}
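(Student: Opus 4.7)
Both properties reduce to straightforward algebraic manipulations of the covariance function, followed by a Taylor expansion in the large-$\tau$ regime. Since $\xi$ is centred Gaussian, its law is determined by $e(s,t;h,A,B)$, so self-similarity amounts to checking a scaling identity for $e$, and the correlation of increments is the fourth difference
\[
R(u,v,s+\tau,t+\tau) = e(u,s+\tau)-e(u,t+\tau)-e(v,s+\tau)+e(v,t+\tau),
\]
which I would analyse separately for the $C_h$ piece and the $c_h$ piece (reading the paper's formula as $AC_h+Bc_h$, the $BC_h$ being a typo).

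\textbf{Self-similarity.} A direct inspection of \eqref{eq:subfroctional} and \eqref{eq:covariance} shows both $C_h$ and $c_h$ are positively homogeneous of degree $h$: $C_h(as,at)=a^hC_h(s,t)$ and $c_h(as,at)=a^hc_h(s,t)$ for $a>0$. Hence $e(as,at;h,A,B)=a^he(s,t;h,A,B)$. Since $\{\xi(at)\}$ and $\{a^{h/2}\xi(t)\}$ are centred Gaussian with the same covariance, they are equal in law.

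\textbf{Asymptotics of $R$.} The core step is a Taylor expansion of $f(x)=(x+\tau)^h$ for $x=O(1)$ and $\tau\to+\infty$:
\[
(x+\tau)^h=\tau^h+hx\tau^{h-1}+\tfrac{h(h-1)}{2}x^2\tau^{h-2}+\tfrac{h(h-1)(h-2)}{6}x^3\tau^{h-3}+O(\tau^{h-4}).
\]
Plugging this into $c_h(u,s+\tau)=\sgn(h-1)[(u+s+\tau)^h-u^h-(s+\tau)^h]$ and forming the fourth difference, all $\tau$-independent terms (in particular $u^h$, $v^h$) cancel. The surviving leading contribution comes from the $hx\tau^{h-1}$ term applied via $(s+\tau)^{h-1}-(t+\tau)^{h-1}\sim(h-1)(s-t)\tau^{h-2}$, giving an $R^{c_h}$-contribution of order $\tau^{h-2}$ with an explicit nonzero prefactor proportional to $(v-u)(t-s)$. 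For the $C_h$ piece, one writes $C_h(u,s+\tau)=u^h+(s+\tau)^h-\tfrac12[(s+\tau+u)^h+(s+\tau-u)^h]$; the symmetric sum kills all \emph{odd} powers of $u$, so the would-be $\tau^{h-1}$ and $\tau^{h-3}$ contributions (which would dominate) disappear. The leading surviving term is of order $u^2\tau^{h-2}$, but a second cancellation occurs in the fourth difference between $s+\tau$ and $t+\tau$, producing instead a term of order $(v^2-u^2)(t-s)\tau^{h-3}$.

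\textbf{Conclusion and main obstacle.} Combining, the $c_h$ piece contributes at rate $\tau^{h-2}$ and the $C_h$ piece at rate $\tau^{h-3}$, so the announced dichotomy follows: when $B>0$ the $c_h$ term dominates and $R\sim\tau^{h-2}$, while when $B=0$ only the $C_h$ term is present and $R\sim\tau^{h-3}$. The main subtlety is bookkeeping the two successive cancellations for the $C_h$ contribution — first the oddness killing the $\tau^{h-1}$ term (which is why the sub-fractional covariance decays faster than the fractional one) and then the first difference between $s$ and $t$ that further eats one power of $\tau$. One should also verify that the prefactors obtained are strictly nonzero for $h\in(1,2)$ and $0\le u<v\le s<t$, which is immediate since $h(h-1)(h-2)\ne 0$ and $(v-u)(t-s)>0$, $(v^2-u^2)(t-s)>0$ on this range.
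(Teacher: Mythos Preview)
The paper does not actually supply a proof of this fact; it is stated in Section~\ref{sec:gaussian} as a summary of basic properties of the process $\xi$, with no argument given either there or in the appendix. Your proposal therefore fills a genuine gap rather than paralleling an existing proof.

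Your argument is correct. The self-similarity part is immediate from the degree-$h$ homogeneity of both $C_h$ and $c_h$, exactly as you say. For the increment correlation, your computation is sound: after the cancellations in the fourth difference one finds
\[
R^{c_h}(u,v,s+\tau,t+\tau)=(u+s+\tau)^h-(u+t+\tau)^h-(v+s+\tau)^h+(v+t+\tau)^h \sim h(h-1)(u-v)(s-t)\,\tau^{h-2},
\]
and for the $C_h$ piece the symmetric combination $(s+\tau+u)^h+(s+\tau-u)^h$ eliminates the odd powers of $u$, leaving a leading term $h(h-1)u^2(s+\tau)^{h-2}$; the further difference in $s$ versus $t$ then yields
\[
R^{C_h}(u,v,s+\tau,t+\tau)\sim -\tfrac{1}{2}h(h-1)(h-2)(u^2-v^2)(s-t)\,\tau^{h-3}.
\]
Both prefactors are nonzero on $0\le u<v\le s<t$ for $h\in(1,2)$, giving the dichotomy. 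One small wording quibble: where you say the ``$\tau^{h-1}$ and $\tau^{h-3}$ contributions (which would dominate) disappear'', only the $\tau^{h-1}$ term would actually dominate the final $\tau^{h-3}$ rate; the $\tau^{h-3}$ odd-power term is subleading anyway, so its vanishing is irrelevant to the conclusion. This does not affect the correctness of the argument.
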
 

\subsection{Symmetric $\alpha$-stable motion}
In this section we summarise analytic properties of a symmetric $\alpha$-stable processes. This can be found in e.g.\cite{Bogdan:2009le}. Let $\alpha \in (0,2]$ and $\cbr{\eta^x_t}_t$ be the symmetric $\alpha$-stable motion in $\Rd$ starting from $x$. It is a homogenous Markov process with the transitions densities denoted by $\cbr{p_t}_{t>0}$. We also define the corresponding semigroup 
\begin{equation}
	\T{t} f(x) := \ev{} f(\eta_t^x)  = \intr p_t(x-y) f(y) \dd{y}, \quad t\geq 0,  \label{eq:semigroup-density}
\end{equation}
where $f:\R^d\mapsto \R$ is a measurable function. We have also the following property
\begin{equation}
	p_{t+s} = p_t \ast p_s,	 \label{eq:conv}
\end{equation}
where $\ast$ denotes convolution (which is nothing else but the Markov property). We will also need the following self-similarity property
\begin{equation}
	p_t(x) = t^{-d/\alpha} p_1(t^{-1/\alpha} x). \label{eq:self-similar}
\end{equation}
The Fourier transform (i.e. $\hat{f}(z) =  \intr e^{\text{i} x z} f(x) \dd{x}  $) of the semigroup $\T{}$ is
\begin{equation}
	\widehat{\T{t} f}(z) = e^{-t|z|^\alpha}\hat{f}(z) \label{eq:fourier.}.
\end{equation}
We recall that for any $f\in \SD$ we have $\hat{f} \in \SD$. It is also well-known that for any $l<\alpha$ and any $t\geq0$ we have 
\[
	\ev{} |\eta_t^x|^l <+\infty.
\]

\subsection{Infinite divisible point processes} 
\label{sub:infinite_divisible_point_processes}

The point processes theory is well-developed (e.g. \cite{Kallenberg:1983pb}); in this section we excerpt facts needed in our work. We denote the space of locally finite point measures on $\Rd$ by $\mathcal{M}(\Rd)$ and endow it with the topology of vague convergence. 
 Our starting measure $M$ is an \textit{infinitely divisible point process} (IDPP). We assume that its expectation is finite and the intensity measure is $\lambda$. For any measurable function $\varphi:\R^d\mapsto \R_+$ the following L\'evy-Khinchine-like formula holds
\begin{equation}
	\lap{-\ddp{M}{\varphi} } = \exp\cbr{-\int_{\mathcal{M}(\Rd)} \rbr{1-e^{-\ddp{\varphi}{\mu}}} Q(\dd{\mu}) },  \label{eq:LevyKhinchine}
\end{equation}
where $Q$ is a measure on $\mathcal{M}(\Rd) - \cbr{0}$ which is called the canonical measure and fulfils  $\ddp{Q \pi_B^{-1}}{1-e^{-x}} <+\infty$ for any Borel bounded set $B$ and $\pi_B:\mathcal{M}(\Rd) \rightarrow \R$ is the mapping given by $\pi_B(\mu) = \mu(B)$. We associate with $Q$ a family of measures $\cbr{Q_x}_{x\in \Rd}$ called the Palm distribution which fulfils
\begin{equation}
	\int_{\mathcal{M}(\Rd)} \ddp{\mu}{h} G(\mu) Q(\dd{\mu}) = \intr h(x) \int_{\mathcal{M}(\Rd)} G(\mu) Q_x(\dd{\mu}) \dd{x},  \label{eq:palmFormula}
\end{equation}
for any $h:\Rd \rightarrow \R_+$, $G:\mathcal{M}(\Rd) \rightarrow \R_+$ which are measurable functions - see \cite[12.3]{Kallenberg:1983pb}.\\
	IDPPs may sound a bit abstract, but in fact they are not in the context of branching particle systems. We give now two examples playing a crucial role in our studies. The first is the Poisson point process (with intensity $\lambda$), for which the distribution $Q_x$ is concentrated on $\delta_x$. The second is the equilibrium measure $Eq$ defined by \eqref{eq:defEqulibrium}, its Palm measure is described in Remark \ref{rem:examples} where we will discuss these examples further. We will need shift operator $\theta_x$ defined as $\theta_x y := y-x, \: y\in \Rd$ for any  $B\subset \Rd$ and measure $\mu\in \mathcal{M}(\Rd)$ we put
\[
	\theta_x B := B-x = \cbr{y-x:y\in B},\quad \theta_x \mu(\cdot) := \mu \circ \theta^{-1}_x(\cdot).
\]
We say that a point process is simple if the multiplicity of each atom is $1$. A simple point process $M$ is fully descried by the (random) set of its atoms $[M]$, viz.
\[
	[M]:= \cbr{x\in \Rd: \delta_x \in M}.
\]

\subsection{Assumptions} 
\label{sub:assumptions}
In this section we list assumptions required further and shortly discuss them.

\begin{itemize}
	\item[(A1)] $M$ is a simple translation invariant IDPP with the finite intensity measure $\lambda$. We will denote it shortly by $\ev{M} = \lambda$ or more precisely for any $f\in \mathcal{L}^1(\Rd)$ 
	\begin{equation}
		\ev{} \ddp{M}{f} = \ddp{\lambda}{f}, \quad \ev{} \ddp{Q}{f} = \ddp{\lambda}{f}.  \label{eq:intensity}
	\end{equation}
	$M$ has a finite second moment. We will assume a condition slightly stronger then usual, viz. for any $f\in \mathcal{L}^1 $ 
	\[
		\ev{} \ddp{M}{f}^2 <+\infty.
	\]
\end{itemize}	
	For a measure $M$ fulfilling (A1) we denote by $Q$ its canonical measure. $Q$ is also translation invariant. The family of the Palm distributions of $Q$ will be denoted by $Q_x$. The translation invariance of implies that $Q_x = \theta_x Q_0$. Moreover we denote the intensity measure of $Q_x$ by $\Lambda_x := \ev{} Q_x$
	\begin{itemize}
	\item[(A2)] Let $(p_t\ast \Lambda_0)(x) = \intr p_t(x-y) \Lambda_0(\dd{y} )$. We assume that
	\[
		\sup_t t^{d/\alpha - 1}  \norm{p_t\ast \Lambda_0}{\infty}{}   < +\infty.
	\]
	\item[(A3)] There exists a non-negative number denoted by $\mathcal{H}(M)$ or $\mathcal{H}(Q)$ such that
	\[
			t^{d/\alpha -1} \ddp{p_t}{\Lambda_0} \rightarrow \mathcal{H}(Q), \quad \text{as } t\rightarrow \infty.
	\]
	
	\item[(A4)] There exist $C, \epsilon>0$ such that for any $\varphi\in \mathcal{L}^1(\Rd)$ and $x\in \Rd$ we have 
	\[
		\rbr{\int_{\mathcal{M}(\Rd)} \ddp{\varphi }{\mu}^{1+\epsilon} Q_x(\dd{\mu} )}^{1/(1+\epsilon)} \!\!\!\leq C {\int_{\mathcal{M}(\Rd)} \ddp{\varphi }{\mu} Q_x(\dd{\mu} )}.
	\]	
\end{itemize}
Let us discuss shortly the above assumptions. We skip (A2) and (A4) which are technical and not very restrictive. (A1) is quite natural as $\lambda$ is preserved by the dynamics of the branching particle system (in expectation). In other papers in the field it is common to specify only a narrow class of the starting distributions (usually a Poisson random field or the equilibrium measure $Eq$). Our approach brings a novelty by admitting a larger class of distributions (the reader should be however warned that the infinite divisibility yields a poissonian structure lurking behind the scene - see \cite[6.2]{Kallenberg:1983pb}). The detailed comparison to the known results is deferred to the next section.\\
The assumption (A3) is the most important one. When $\mathcal{H}(Q)>0$ it implies that particles appear at the beginning in ``large clans''. We postpone further discusstion of this phenomenon to Remark \ref{rem:intuitve} focusing now on giving some quantitive insights. The condition (A3) is roughly equivalent to the fact
\[
	t^{-\alpha} \Lambda_0 (B(t)) \text{ converges},
\]
where $B(t)$ is a ball of radius $r$. This equivalence is Tauberian in its spirit (though does not fall under the classical theory). For  illustrative purposes  we prove the following simple lemma, not pursuing analytical depth of the question

\begin{fact}\label{fact:measure}
	Assume that there exist $C\geq 0, \epsilon >0$ such that
	\begin{equation}
		t^{-\alpha} \Lambda_0 (B(t))  \in (\mathcal{H}(\Lambda) - C t^{-\epsilon}, \mathcal{H}(\Lambda)+C t^{-\epsilon}), \quad \forall_{t\geq1},	 \label{eq:cond2}
	\end{equation}
	then 
	\begin{equation}
		t^{d/\alpha - 1}\ddp{p_t}{\Lambda_0} \rightarrow c_\alpha \mathcal{H}(\Lambda) , \quad \text{ as }t\rightarrow +\infty,	 \label{eq:cond1}
	\end{equation}
	where $c_\alpha = \alpha \inti p_1(x) x^{\alpha -1} \dd{x}.$
\end{fact}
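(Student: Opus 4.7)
\medskip
\noindent\textbf{Proof plan.} The plan is to reduce everything to the radial profile of $p_1$ and then combine a layer-cake representation with the hypothesis \eqref{eq:cond2}. First I would invoke the self-similarity \eqref{eq:self-similar} to rewrite
\[
t^{d/\alpha - 1}\ddp{p_t}{\Lambda_0} \;=\; \frac{1}{t}\intr p_1(t^{-1/\alpha} x)\,\Lambda_0(\dx).
\]
Since $p_1$ is radially symmetric and non-increasing, write $p_1(x) = g(|x|)$ with $g$ non-negative and non-increasing on $[0,+\infty)$, so that $g(r) = \int_r^{+\infty}(-g'(s))\ds$. An application of Fubini then converts the right-hand side into an integral over radii,
\[
\frac{1}{t}\intr g(t^{-1/\alpha}|x|)\,\Lambda_0(\dx) \;=\; \frac{1}{t}\inti (-g'(s))\,\Lambda_0\!\bigl(B(st^{1/\alpha})\bigr)\ds,
\]
where the measure $-g'(s)\ds$ is non-negative. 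This is the point of the argument: it replaces the coupling between $p_t$ and $\Lambda_0$ by a one-dimensional integral involving only the radial mass function $r\mapsto \Lambda_0(B(r))$, which is exactly what the hypothesis \eqref{eq:cond2} controls.

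Next I would split the $s$-integral at $s = t^{-1/\alpha}$. On the tail $s \geq t^{-1/\alpha}$ the radius $st^{1/\alpha} \geq 1$, so \eqref{eq:cond2} gives
\[
\bigl| t^{-1}\Lambda_0(B(st^{1/\alpha})) - \mathcal{H}(\Lambda)\, s^\alpha\bigr| \;\leq\; C\, s^{\alpha-\epsilon} t^{-\epsilon/\alpha},
\]
so the tail integrand tends pointwise to $(-g'(s))\mathcal{H}(\Lambda) s^\alpha$ with a $t$-uniform majorant $(-g'(s))(\mathcal{H}(\Lambda)+C)s^\alpha$. On the head $s < t^{-1/\alpha}$ the bound $\Lambda_0(B(st^{1/\alpha})) \leq \Lambda_0(B(1))$ together with $\int_0^{t^{-1/\alpha}}(-g'(s))\ds = g(0)-g(t^{-1/\alpha}) \to 0$ shows the head contributes $o(1/t)$ after multiplication by $1/t$, hence nothing in the limit. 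Dominated convergence on the tail then yields
\[
\lim_{t\to+\infty} t^{d/\alpha - 1}\ddp{p_t}{\Lambda_0} \;=\; \mathcal{H}(\Lambda)\inti (-g'(s))\,s^{\alpha}\ds,
\]
and a final integration by parts gives $\inti(-g'(s))s^\alpha\ds = \alpha\inti g(s)s^{\alpha-1}\ds = c_\alpha$, matching the claimed constant.

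The main obstacle is verifying the dominating function $(-g'(s))s^\alpha$ is integrable on $(0,+\infty)$, which is needed both for the dominated-convergence step and to make sense of the boundary terms in the integration by parts. Near $s=0$ this follows from $\alpha>0$ and boundedness of $g$ (so the primitive $g(s)s^\alpha$ vanishes at $0$); near $s=+\infty$ one uses the classical asymptotic $p_1(x) \ceq |x|^{-d-\alpha}$ for symmetric $\alpha$-stable densities ($\alpha<2$), which gives $g(s)s^\alpha \ceq s^{-d} \to 0$ and makes $g(s)s^{\alpha-1}$ integrable at infinity since $d\geq 1$. With these two endpoint checks in hand, the boundary contributions in the integration by parts vanish and the computation above is rigorous.
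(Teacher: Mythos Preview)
Your argument is correct and is in fact cleaner than the paper's own proof. Both proofs start from the same self-similarity reduction
\[
t^{d/\alpha-1}\ddp{p_t}{\Lambda_0}=t^{-1}\intr p_1(t^{-1/\alpha}x)\,\Lambda_0(\dx),
\]
and both exploit the radial monotonicity of $p_1$ to express the right-hand side through the mass function $r\mapsto \Lambda_0(B(r))$. The difference is in how this reduction is carried out. The paper discretises: it sandwiches $f(t)$ between two Riemann-type sums over annuli of width $t^{1/\alpha}/n$, applies dominated convergence in the summation index to send $t\to\infty$ (here the error exponent $\epsilon$ in \eqref{eq:cond2} is used to build an $i^{\alpha-\epsilon}$ majorant), and only then lets $n\to\infty$ to recover the integral $\alpha\int_0^\infty p_1(s)s^{\alpha-1}\ds$. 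You instead pass directly to the continuous layer-cake representation $g(r)=\int_r^\infty(-g'(s))\,\ds$ and apply dominated convergence once, with the constant emerging from a single integration by parts.

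Your route avoids the double limit and the somewhat awkward bookkeeping of the discrete sandwich; the price is that you need $g$ to be absolutely continuous (immediate from smoothness of stable densities) and you need to handle the endpoint behaviour of $g(s)s^\alpha$, which you do via the standard $|x|^{-d-\alpha}$ tail for $\alpha<2$ (the Gaussian case $\alpha=2$ is even easier). One cosmetic remark: your sentence ``the head contributes $o(1/t)$ after multiplication by $1/t$'' is slightly garbled---the $1/t$ is already present, and the bound $\tfrac{1}{t}\Lambda_0(B(1))\bigl(g(0)-g(t^{-1/\alpha})\bigr)\to 0$ is all that is needed.
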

The proof is deferred to Appendix. 
\section{Result} 
\label{sec:result}

Let us denote convergence in law by $\rightarrow^d $ and recall the covariance functions from Section \ref{sec:gaussian}. We recall also the Gamma function $\Gamma(z)=\inti t^{z-1}e^{-t}\dd{t}$.

Below, we present a functional central limit theorem, which is the main result of the paper 
\begin{theorem}\label{thm:starting}
	Assume that $\alpha < d < 2\alpha$, and $F_T = T^{(3-d/\alpha)/2}$. Let $N$ be a branching particle system starting off from a random point process $M$ fulfilling assumptions (A1)-(A4). Let $X_T$ be its rescaled occupation time fluctuations processes given by \eqref{eq:fluctuatons_process}. We have
	\[
		X_T \rightarrow^d  X, \:\: \text{ in } \mathcal{C}([0,1], \SP) \text{ as } T\rightarrow +\infty.
	\]
	where $X$ is a measure-valued centred Gaussian process with covariance function
	\[
		Cov( \ddp{X(s)}{\varphi_1}, \ddp{X(s)}{\varphi_2} ) = \ddp{\lambda}{\varphi_1} \ddp{\lambda}{\varphi_2} \sbr{K_1 C_h(s,t) + \mathcal{H}(M)K_2 c_h(s,t)},
	\]  
	where $\mathcal{H}(M)$ is defined as in (A3), $h:=3-d/\alpha\in (1,2)$ and
	\[
		K_1 := \frac{V \Gamma(2-h)}{2^{d-1} \pi^{d/2} \alpha \Gamma(d/2) h(h-1) }, \quad K_2 := \frac{1}{h(h-1)}.
	\]
\end{theorem}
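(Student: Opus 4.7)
The proof proceeds via the standard two-step program for functional convergence in $\mathcal{C}([0,1],\SP)$: convergence of finite-dimensional distributions together with tightness. By Mitoma's theorem, tightness in $\SP$ reduces to tightness of the real-valued processes $\ddp{X_T(\cdot)}{\varphi}$ for each fixed $\varphi\in\SD$, and finite-dimensional convergence can be read off from the Laplace transform. The plan is therefore to identify the limit of $\ev{}\exp(-\ddp{X_T}{\Phi})$ for space-time test functions $\Phi(s,x)=\sum_{i=1}^k\varphi_i(x)\mathbf{1}_{[0,t_i]}(s)$ and, separately, to establish a suitable increment moment bound.

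Two ingredients then drive the computation. First, conditionally on $N_0=\mu$, the system $N$ decomposes into independent copies issued from the atoms of $\mu$, so the conditional Laplace functional factors as $\exp(-\ddp{\mu}{v_T})$, where $v_T:\Rd\to\R_+$ solves the non-linear log-Laplace integral equation for critical binary branching driven by the $\alpha$-stable semigroup $\T{\cdot}$. Second, the L\'evy--Khinchine formula \eqref{eq:LevyKhinchine} combined with centring by $\lambda$ yields
\[
\ev{}\exp(-\ddp{X_T}{\Phi})=\exp\!\Big(-\!\!\int_{\mathcal{M}(\Rd)}\!\!\big(1-e^{-\ddp{\mu}{v_T}}-\ddp{\mu}{v_T}\big)\,Q(\dd{\mu})\Big),
\]
so the whole analysis reduces to controlling this exponent as $T\to+\infty$.

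The core asymptotic step is a Taylor expansion $1-e^{-u}-u=-\tfrac12 u^2+O(u^3)$, after which the Palm formula \eqref{eq:palmFormula} splits the quadratic term into a diagonal piece $\tfrac12\int_{\Rd}(v_T(x))^2\,\dd{x}$ (coming from coincident atoms in $\ddp{\mu}{v_T}^2$) and an off-diagonal piece $\tfrac12\int_{\Rd}v_T(x)\,\big(\int\ddp{\tilde\mu}{v_T}\,Q_x(\dd{\tilde\mu})\big)\dd{x}$. The diagonal piece, together with the quadratic contribution inherent in the log-Laplace equation (the term proportional to $V v_T^2$), reproduces the sub-fractional Brownian covariance $K_1 C_h(s,t)$ already derived for the Poisson starting measure in \cite{Bojdecki:2006ab}. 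The off-diagonal piece is genuinely new: after inserting the linearised leading term of $v_T$ (built from $\T{\cdot}\Phi$) and using \eqref{eq:conv} and self-similarity \eqref{eq:self-similar}, it becomes a rescaled integral of $\sigma\mapsto\ddp{p_\sigma}{\Lambda_0}$ against a time kernel; applying (A3) in the form $\sigma^{d/\alpha-1}\ddp{p_\sigma}{\Lambda_0}\to\mathcal{H}(Q)$ and carrying out the residual $s,t$ integration produces the kernel $c_h(s,t)$ with the constant $K_2$. Assumption (A2) supplies the uniform bound needed to interchange limit and integral, while (A4) controls the cubic Taylor remainder uniformly in $T$ and $\mu$.

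Tightness is obtained from a moment estimate $\ev{}|\ddp{X_T(t)-X_T(s)}{\varphi}|^{2+\eta}\le C_\varphi|t-s|^{1+\eta/2}$, derivable by the same Laplace expansion applied to a pair of time-cutoffs and exploiting (A4) to upgrade second to $(2+\eta)$-moments. The main obstacle, in my view, is the off-diagonal (\emph{clan}) contribution: one must verify that the scaling $F_T=T^{(3-d/\alpha)/2}$ localises the double moment integrals precisely on the tail regime of $\Lambda_0$ captured by (A3), and that the residual temporal integration assembles into $(s+t)^h-s^h-t^h$ rather than collapsing into a multiple of $C_h$. Recovering the constant $K_2$, and in particular the $\sgn(h-1)$ factor (which is positive throughout the intermediate-dimension range $h\in(1,2)$), is a bookkeeping checkpoint that the computation is internally consistent and explains why this mechanism is invisible for starting measures with $\mathcal{H}(M)=0$.
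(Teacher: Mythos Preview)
Your displayed identity for the Laplace exponent is incorrect, and this is where the sub-fractional piece gets lost. Centring $X_T$ by $\lambda$ contributes the factor $\exp\bigl(\ddp{\lambda}{n_T}\bigr)$, where $n_T(x)=\int_0^T\T{s}\Psi_T(\cdot,s)(x)\,\dd{s}$ is the \emph{linearised} solution, whereas the L\'evy--Khinchine step produces $\exp\bigl(-\int(1-e^{-\ddp{\mu}{v_T}})\,Q(\dd{\mu})\bigr)$. These do not combine into your formula because $\ddp{\lambda}{n_T}\neq\int\ddp{\mu}{v_T}\,Q(\dd{\mu})=\ddp{\lambda}{v_T}$. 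The true exponent therefore carries the additional term
\[
I_1(T)\;:=\;\ddp{\lambda}{n_T}-\ddp{\lambda}{v_T^{\mathrm{paper}}}\;=\;\int_{\Rd}\bigl(n_T(x)-v_T^{\mathrm{paper}}(x)\bigr)\,\dd{x},
\]
and it is precisely this mismatch, through the nonlinearity $\tfrac{V}{2}v^2$ in the log-Laplace equation, that converges to the $K_1C_h$ kernel (this is the computation in \cite{Bojdecki:2006ab}). Your formula keeps only the paper's $I_3$-term, which produces $\mathcal{H}(M)K_2c_h$ and nothing else.

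Relatedly, your attribution of sources is off: the ``diagonal piece'' $\tfrac12\int v_T^2$ does \emph{not} contribute to $C_h$; it is $O(T^{d/\alpha-2})$ and vanishes (this is the paper's $I_2\to0$). The sub-fractional covariance comes entirely from $I_1$ above, not from any part of the Palm expansion of $\ddp{\mu}{v_T}^2$. Once $I_1$ is restored, the Palm term $I_3\approx -\tfrac12\int n_T(x)\int\ddp{\mu}{n_T}\,Q_x(\dd{\mu})\,\dd{x}$ is handled essentially as you sketch, via (A2)--(A3) and Lemma~\ref{lem:derivation}.

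On tightness: the paper does not use $(2+\eta)$-moments. A second-moment bound $\ev{}\bigl(\ddp{X_T(t_2)-X_T(t_1)}{\varphi}\bigr)^2\le C|t_2-t_1|^{3-d/\alpha}$ already suffices for Billingsley's criterion since $3-d/\alpha>1$, and this is obtained by a direct variance decomposition (the $J_1,J_2,J_3$ of Section~\ref{sec:tightness}); (A4) is used only to kill the cubic remainder in the Laplace computation \eqref{eq:z32Z3}, not for tightness.
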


\begin{rem} \label{rem:examples}
	We now present a few examples. We keep the assumptions of Theorem \ref{thm:starting} changing only starting distributions $M$
	\begin{itemize}
		\item Let $M$ be a Poisson random field. Assumptions (A1)-(A4) are easily verifiable and  obviously $\mathcal{H}(M) = 0$ therefore limit process $X$ has  covariance function
		\[
			K_1 \ddp{\lambda}{\varphi_1} \ddp{\lambda}{\varphi_2}  C_h(s,t).
		\]
		\item Let $M$ be a ``compound Poisson random field with finite clans''. It is a random point process obtained by replacing each atom of a Poisson point process by an independent copy of another point process. More precisely, let $N$ be a simple random point process such that $\ev{}N(\Rd) = 1$ and $N(\Rd) \leq C $ for a certain constant $C>0$. Let $P$ be a Poisson point process with intensity $\lambda$. We construct $M$ by
		\[
			M:= \sum_{x \in [P]} \theta_x N^x,
		\] 
		where $N^x$ are independent copies of $N$. It is straightforward to check that $M$ is a simple, translation invariant IDPP with the canonical measure
		\[
			Q = \intr \theta_x \mathcal{N} \dd{x}, 
		\]
		where $\mathcal{N}$ is the distribution of $N$. As usual we denote the Palm distributions of $Q$ by $Q_x$. Our assumptions imply that intensity measures $\Lambda_x := \ev{} Q_x$ fulfil $\Lambda_x(\Rd)=1$. Using \eqref{eq:self-similar} we obtain
		\[
			t^{d/\alpha - 1} \ddp{p_t}{\Lambda_0} \leq t^{d/\alpha - 1} p_t(0) = t^{-1} p_1(0) \rightarrow 0.
		\]
		Therefore assumptions (A2)-(A3) are fulfilled with $\mathcal{H}(Q) = 0$. (A4) can be checked utilising condition $N(\Rd) \leq C $. Finally, using Theorem \ref{thm:starting} we conclude that limit process $X$ has covariance function
		\[
			K_1 \ddp{\lambda}{\varphi_1} \ddp{\lambda}{\varphi_2}  C_h(s,t).
		\]
		which is the same as in the case of the Poisson point process.
		\item Let $M$ be the equilibrium measure defined by \eqref{eq:defEqulibrium}. It is known that it is a simple IDPP with a finite second moment; let us denote its canonical measure by $Q$. The Palm distributions $Q_x$ are equal to the law of  $\delta_x + \xi^x_\infty$, where $\xi^x_\infty$ is the limit of
		\begin{equation}
			\xi^x_t = \int_0^t \zeta_s^{s,X_s^x} \nu(\dd{s}), \label{eq:clan}
		\end{equation}
		where $\nu$ is a ``Poisson clock'' with intensity $V$, $\cbr{X_s^x}_{t\geq 0}$ a symmetric $\alpha$-stable process starting at $x$ (we think of it as moving backward in time), $\cbr{\zeta_{t}^{s,y}}_{t\geq 0}$ are independent branching particle systems starting with one particle at $y$ at time $-s$. $\xi^x_t$ is a clan of particles having a common ancestor, for details and formalisation see \cite{Gorostiza:1991aa,Zahle:2002aa}. We prove in Section \ref{sec:equlibrium} that the intensity measure of $\xi^0_\infty$ is ($\delta_0$ if irrelevant in limit)
		\[
			\Lambda_0(\dd{y} ) = \frac{V c_{\alpha,d}}{2} |y|^{\alpha -d} \dd{y}. 
		\]
	Using \eqref{eq:self-similar} we can make explicit calculations (we denote $C_1 := \frac{V c_{\alpha,d}}{2} $)
		\[
			t^{d/\alpha-1}(p_t \ast \Lambda_0)(x) = C_1 t^{d/\alpha-1} \intr p_t(y)  |x-y|^{\alpha -d}  \dd{y} = C_1 t^{-1}\intr p_1( t^{-1/\alpha }y) |x-y|^{\alpha-d} \dd{y} =(*).
		\]
		  Now changing integration variable $y \rightarrow t^{1/\alpha} y$ we get
		\[
			(*) = C_1 t^{d/\alpha-1}\intr p_1( y) |x-t^{1/\alpha }y|^{\alpha-d}  \dd{y} =  C_1 \intr p_1( y) |t^{-1/\alpha }x-y|^{\alpha-d} \dd{y}.
		\]
		Using the last expression we easily verify (A2) and pass to the limit
		\[
			t^{d/\alpha-1}\ddp{p_t}{\Lambda_0} = t^{d/\alpha-1}(p_t \ast \Lambda_0)(0) \rightarrow \frac{Vc_{\alpha,d}}{2} \intr  \frac{p_1( y)}{|y|^{d-\alpha}} \dd{y} =: \mathcal{H}(Q). 
		\]
		We defer calculation of $\mathcal{H}(Q)$ to \eqref{eq:tmpIntegral}. Further, using Theorem \ref{thm:starting} we obtain that the limit process has covariance function
		\[
			K_1 \ddp{\lambda}{\varphi_1} \ddp{\lambda}{\varphi_2}  \sbr{C_h(s,t) + 1/2 c_h(s,t)}.
		\]
		We have already noticed that $C_h(s,t) + 1/2 c_h(s,t)$ is the covariance function of the sub-fractional Brownian motion.\\ A careful reader notices that we did not check (A4). Unfortunately, because of lack of manageable moments of order $1+\epsilon$ it is very hard. Instead, using techniques of \cite{Milos:2007ab} one can check directly that \eqref{eq:z32Z3} converges to $0$. This is the only place where (A4) is used.
	\end{itemize}	
\end{rem}

\begin{rem} \label{rem:intuitve}
	In this remark we would like to illustrate the theorem in an intuitive manner thus we do not pursue full mathematical rigour.\\
	(a)  We were able to indicate parameter $\mathcal{H}(M)$ of the starting distribution, which is crucial for the behaviour of limiting process. Measures $Q$ and $Q_x$ describe particles which together enter the system at time $t=0$. We will call such group of particles a clan. For rigorous treatment of clans in case of $Eq$ see \cite{Zahle:2002aa}. \\
	(b) Assumption (A3) gives a condition required to clans to have ``influence to the limit''. In order to explain it we refer to Fact \ref{fact:measure}. Under assumption $\mathcal{H}(\Lambda)$ equation \eqref{eq:cond2} states that the (expected) number of particles in a clan has to be of the specified density. In particular this implies that the clan has to be infinite. \\
	(c) The limit process $X$ could be written in the form $X = \xi \lambda$, where $\xi$ is a real-valued centred Gaussian process with covariance function $K_1 C_h(s,t) + \mathcal{H}(M)K_2 c_h(s,t)$. It has a simple spatial structure and a complicated temporal one. Let us recall that $\xi$ is a long-range dependance process with the covariance decay rate described by Fact \ref{fact:decay}. The long range dependance stems from the recurrence properties of the movement. This could be considered at particles' level and at clans' level. The former can be explained as follows, a particle which occupies a given set at time $t_1$ is likely to ``still be there (or come back)'' at some later time $t_2$. Of course the likelihood decreases with $t_2 - t_1$ (our results indicate that the decay order is $-d/\alpha$). The clans' level dependance is of slightly different mechanism. Again we consider a particle which visits a given set at time $t_1$. It is accompanied by a large number of relatives from its clan (clans are infinite!). So even when it fails to be in the set at time $t_2$ there is still a chance that some of its kinsman is there.  This is reasoning is delicate, let us notice that the clans members evolve independently and the only source of dependance is through the fact that they arrived at $t=0$ in a cluster. Since the clan is infinite the family bonds are more strongly correlated than particles itself (i.e. when $\mathcal{H}(M)>0$  the decay order is $-d/\alpha+1$).\\
	(d) The above explanation could be put in a slightly more formal setting (the reader might find it useful to read the proofs first and come back to this remark later). By the polarisation formula we have $$Cov(\ddp{X_T(s)}{\varphi}, \ddp{X_T(t)}{\varphi}) = \sbr{L(s,0,T)+L(t,0,T)-2L(s,t,T)}/2,$$
	where $L$ is given by \eqref{eq:l1}. One is further referred to \eqref{eq:l2}. The term $J_3$ capture the dependance at the particle level and results in $C_h(s,t)$. The terms $J_1,J_2$ describe the dependance on the clan level. To see we consider the ``clan-less configuration'', viz. the Poisson point process. One checks that $Q = \intr \delta_x \dd{x}$ and hence $J_1+J_2=0$. $J_2$ turns out to be negligible in limit. Finally we draw the reader attention to \eqref{eq:l3} which could be proved to converge to $\mathcal{H}(Q)c_h(s,t)$.\\
	(e) Another way of explaining the additional term is to notice that $\ddp{n_T}{\mu}$ in $J_1$ is the same as 
	$$\intr \int_{Tt_1}^{Tt_2} \T{t}\varphi_T(x) \dd{t}   \mu(\dd{x}) = \int_{Tt_1}^{Tt_2}\ddp{\varphi_T}{ p_t \ast \mu}\dd{t} = \int_{Tt_1}^{Tt_2}\ev{} \ddp{\varphi_T}{ N_t^\mu}\dd{t},$$
		where $N^\mu$ denotes the branching system starting from $N_0 = \mu$. Therefore $\ddp{n_T}{\mu}$ could be considered as the occupation time of the mean density of the $N^\mu$ (that is the mean density of a clan). In this interpretation, $J_1+J_2$ is a variance, and consequently the term $\mathcal{H}(M)K_2 c_h(s,t)$ stems from the variability of the initial clan configurations. \\
		(f) We notice also that the limit process does not have stationary increments unless $\mathcal{H}(M) = \frac{2K_2}{K_1}$. We already know that this happens in case of $Eq$ but now we see that it can happen more generally. We suspect that this may be related to the speed of convergence to $Eq$.
\end{rem}
\begin{rem}
	The limit process $X$ could be written in the form $X = \xi \lambda$, where $\xi$ is a real-valued centred Gaussian process with covariance function $K_1 C_h(s,t) + \mathcal{H}(M)K_2 c_h(s,t)$. We observe a simple spatial structure and a complicated temporal one. Let us recall that $\xi$ is a long-range dependance process with the covariance decay rate described by Fact \ref{fact:decay}. The form of the limit is intuitively explained with the notion of clans recurrence. In \cite{Stoeckl:1994aa} properties of clans are described in the case of $Eq$ measure. A clan here can be easily imagined once we consider an ``infinitely old'' branching particle system  $\cbr{N_t}_{t \in \R}$. At time $0$, the clans are maximal sets of particles having a common ancestor in past. For the system of Brownian particles in $d=3$ (i.e. the only intermediate dimension for $\alpha=2$) any given ball is visited infinitely often by members of each clan. This results in smoothing of the spatial structure but introduces long-range dependance to the temporal one. The clans have not been studied in the case of $\alpha$-stable particle systems and measures different then $Eq$ but we believe that the picture is still valid.
\end{rem}

\begin{rem} \label{rem:bojecki}
	 This research was inspired by developments of \cite{Bojdecki:2010bl}. They consider the same problem for systems of particles moving in $\R$ and a class of starting measures defined by
	\[
		M = \sum_{j \in \mathbb{Z}} \sum_{n=1}^{\theta_j} \delta_{\rho^j_{\theta_j,n}},
	\] 
	where $\theta_j$ are independent copies of a non-negative integer-valued random variable $\theta$ fulfilling $\ev{}\theta^3 <+\infty$ and for each $j\in \mathbb{Z}$ and $k=1,2,\ldots$ we have a random vector $\rho_k^j = \rbr{\rho_{k,1}^j, \rho_{k,2}^j, \ldots, \rho_{k,k}^j }$ with values in $[j,j+1)^k$ such that $\rbr{\theta_j, (\rho_k^j)_{k=1,2, \ldots}}_{j\in \mathbb{Z}}$ are independent. They proved that for the system of particles starting from $M$ and moving according to a symmetric $\alpha$-stable motion \emph{without branching} the limit process $X$ is a centred Gaussian process with covariance function (Theorem 2.4)
	\[
		C \ddp{\lambda}{\varphi_1}\ddp{\lambda}{\varphi_2} \sbr{({\ev{\theta}}) C_h(s,t) + 1/2 \rbr{\var \theta} c_h(s,t)}.
	\]
	This result is very similar to ours with the role of $\mathcal{H}(Q)$ played by $\var \theta$. From this point of view the interpretation given in Remark \ref{rem:intuitve} (d) seems appealing. In \cite{Bojdecki:2010bl} the system with branching is also studied but in this case the limit depends only on $\ev{\theta}$. As the authors explain in Remark 2.5 ``since the trees tend to extinction by criticality, the effect of the fluctuations of $\theta$ become more and more negligible  ...''. Our results could be seen as complementary to this fact and assumption (A3) is a condition which prevents the fluctuations to disappear.
\end{rem}
\begin{rem}
	A natural question appears what happens when the limit in (A3) is infinite i.e. the clans are infinite and ``denser'' that \eqref{eq:cond2}. We conjecture that in such a case one needs norming larger than $F_T = T^{(3-d/\alpha)/2}$ and the limit process $X$ is a centred Gaussian process with covariance function
	\[
		C \ddp{\lambda}{\varphi_1}\ddp{\lambda}{\varphi_2} c_h(s,t),
	\]
	where $C>0$. That would indicate that the condition of assumption (A3) defines a point of a  ``phase transition'' from the case of small clans and processes with covariance function $C_h$ to the case of big clans and processes with covariance function $c_h$.
\end{rem}
\begin{rem}
	\cite{Milos:2007ab} covers a wider class of branching law, namely critical branching laws with finite variance. We conjecture that our theorem  holds in this setting (with possible change of constants) too. \\
	We also suspect that analogous results hold for systems with infinite variance branching law (like the ones in \cite{Bojdecki:2007aa, Milos:aa}).
\end{rem}

\section{Proofs} 
\label{sec:proofs}
We follow the path, which proved to be successful in solving similar problems. It stems from \cite{Bojdecki:2006ab}. The description below is based on \cite[Section 3.1, Section 3.2]{Mios:2009oq}.
\subsection{Scheme} \label{sec:scheme}
To make the proof more clear we present a general scheme first and defer details to separated sections. Although the processes $X_T$ are signed-measure-valued it is convenient to regard them as processes with values in $\SP$. In this space one may employ a space-time method introduced by \cite{Bojdecki:1986aa} which together with Mitoma’s theorem constitute a powerful technique in proving weak, functional convergence. For a continuous, $\SP$-valued process $X= \cbr{X_t }_{t\geq0}$ we define an $\mathcal{S}'(\mathbb{R}^{d+1})$-valued random variable $\tilde{X}$ by
\begin{equation}
 \ddp{\tilde{X}}{\Phi} := \intc{1}\ddp{X_t}{\Phi(\cdot,t)}\dd{t}, \label{eq:space-time-method}
\end{equation}
we will refer to $\tilde{X}$ as the space-time variable of $X$.
\paragraph*{Convergence} From now on we will denote by $\tilde{X}_T, \tilde{X}$ the space-time variables corresponding to $X_T, X$. To prove convergence of $\tilde{X}_T$ we will use its Laplace functional
\begin{equation*}
 L_T(\Phi) = \ev{e^{-\ddp{\tilde{X}_T}{\Phi}}},\quad \Phi\in \mathcal{S}(\mathbb{R}^{d+1})_+.
\end{equation*}
For the limit process $X$ denote
\begin{equation*}
 L(\Phi) = \ev{e^{-\ddp{\tilde{X}}{\Phi}}},\quad \Phi\in \mathcal{S}(\mathbb{R}^{d+1})_+.
\end{equation*}
Once we establish the convergence 
\begin{equation}
 L_T(\Phi) \rightarrow L(\Phi),\quad \text{ as } T\rightarrow +\infty, \forall_{\Phi\in \mathcal{S}(\mathbb{R}^{d+1})_+}. \label{lim:laplace}
\end{equation} 
we will obtain weak convergence $\tilde{X}_T \rightarrow^d  \tilde{X}$. Two technical remarks should be made here. We consider only non-negative $\Phi$. The procedure how to extend the convergence to any $\Phi$ is explained in \cite[Section 3.2]{Bojdecki:2006ab}. Another issue is the fact that $\ddp{\tilde{X}_T}{\Phi}$ is not non-negative (which is a usual condition to use the Laplace transform). The usage of the Laplace transform in this paper is justified by the Gaussian form of the limit. For more detailed explanation one can check \cite[Section 3.2]{Bojdecki:2006ab}. Detailed calculations for this part of the scheme will be conducted in Section \ref{sub:the_laplace_transform}.
\paragraph*{Tightness} By \cite{Bojdecki:1986aa} we know that $\tilde{X}_T \rightarrow^d  \tilde{X}$ together with tightness of $\cbr{X_T}_{T}$ implies convergence $X_T \rightarrow^d X $ in $\mathcal{C}([0,1], \SP)$. The tightness can be proved utilising the Mitoma theorem \cite{Mitoma:1983aa}. It states that tightness of $\cbr{X_T}_T$ with trajectories in $C([0, 1], \SP)$ is equivalent to tightness of $\ddp{X_T}{\varphi}$, in $\mathcal{C}([0,\tau],\mathbb{R})$ for every $\varphi \in \mathcal{S}(\Rd)$. Using the simple trick (see \cite{Bojdecki:2006ab}[Lemma on p.9]) we may also assume that $\varphi\geq 0$. Let us recall a classical criterion \cite[Theorem 12.3]{Billingsley:1968aa}, i.e. a process $\cbr{\ddp{X_T(t)}{\varphi}}_{t\geq 0}$ is tight if for any $t_1,t_2\geq0$ and there exist  constants $C>0$, $\gamma>1$
\begin{equation}
 \ev{(\ddp{X_T(t_1)}{\varphi}- \ddp{X_T(t_2)}{\varphi})^2} \leq C (t_1-t_2)^\gamma. \label{ineq:tightness}
\end{equation}
Calculations are deferred to Sections \ref{sec:tightness}.

\subsection{One-particle equation}
In this section we will introduce an equation which is crucial for further analysis. The law of the occupation time for the system starting off from a single particle at $x$ is described by 
\begin{equation}
v_{\Psi}\left(x,r,t\right):=1-\mathbb{E}\exp\left\{ -\int_{0}^{t}\left\langle N_{s}^{x},\Psi\left(\cdot,r+s\right)\right\rangle \dd{s}\right\}, \Psi\geq0 \label{def:v},
\end{equation}
where $N_s^x$ denotes the empirical measure of the particle system with the initial condition $N_0^x = \delta_x$. More precisely $N^x$ is a system starting from one particle placed at $x$ and dynamics described in the Introduction. The following lemma gives the announced equation
\begin{lem} \label{lem:equation} 
 Assume that $\Psi \in \mathcal{S}(\mathbb{R}^{d+1})_+$ then
\begin{equation}
 0\leq v_\Psi\leq 1, \label{eq:vin[0,1]}
\end{equation} 
and $v_\Psi$ satisfies the equation
\begin{equation}
v_{\Psi}\left(x,r,t\right)=\int_{0}^{t}\mathcal{T}_{t-s}\left[\Psi\left(\cdot,r+t-s\right)\left(1-v_{\Psi}\left(\cdot,r+t-s,s\right)\right)-\frac{V}{2}  v_{\Psi}^2\left(\cdot,r+t-s,s\right)\right]\left(x\right)\dd{s}.\label{eq:v-integral}
\end{equation}
\end{lem}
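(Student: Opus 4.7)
The bound $0 \leq v_\Psi \leq 1$ is immediate: since $\Psi \geq 0$ and $N^x$ is a non-negative measure-valued process, the exponent in \eqref{def:v} is non-positive, so the expectation lies in $(0,1]$, giving $v_\Psi \in [0,1]$. For the integral equation \eqref{eq:v-integral}, my plan is to first derive the nonlinear backward PDE satisfied by $v_\Psi$ and then convert it to integral form via Duhamel's principle.

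Set $u := 1 - v_\Psi$ and let $\mathcal{A}$ denote the infinitesimal generator of the $\alpha$-stable semigroup $\T{}$ (so $\partial_t \T{t} = \mathcal{A}\T{t}$ and $\mathcal{A}\cdot 1 = 0$). To obtain the PDE I would condition on the evolution of $N^x$ on a short interval $[0,h]$: with probability $e^{-Vh}$ no branching occurs and $N_h^x = \delta_{\eta_h^x}$, while with probability $\approx Vh$ a branching event takes place, yielding $0$ or $2$ offspring (both at approximately $\eta_h^x$) with probability $1/2$ each. The Markov property of $\eta^x$ together with the branching property of $N^x$ gives
\[
	u(x,r,t+h) = \ev{}\sbr{ e^{-\int_0^h \Psi(\eta_s^x, r+s)\dd{s}} \rbr{ (1-Vh)\, u(\eta_h^x, r+h, t) + \tfrac{Vh}{2}(1 + u^2(\eta_h^x, r+h, t)) } } + o(h).
\]
Taylor-expanding in $h$ (using $\ev{} f(\eta_h^x) = f(x) + h \mathcal{A} f(x) + o(h)$), substituting $u = 1 - v_\Psi$, and using $\mathcal{A}\cdot 1 = 0$, I obtain
\[
	(\partial_t - \partial_r - \mathcal{A})\, v_\Psi = \Psi(1-v_\Psi) - \tfrac{V}{2}\, v_\Psi^2, \qquad v_\Psi|_{t=0} = 0.
\]

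To convert this PDE to integral form, I observe that for any initial datum $w_0$ the function $w(x,r,t) := \T{t}[w_0(\cdot,\, r+t)](x)$ solves the homogeneous equation $(\partial_t - \partial_r - \mathcal{A})w = 0$ with $w|_{t=0} = w_0$, because $\T{t}$ commutes with the time-shift $\partial_r$ and satisfies $\partial_t \T{t} = \mathcal{A} \T{t}$. Duhamel's principle then yields, for the inhomogeneous problem $(\partial_t - \partial_r - \mathcal{A}) v = F$ with $v|_{t=0} = 0$, the mild representation
\[
	v(x,r,t) = \int_0^t \T{t-s}\sbr{ F(\cdot,\, r + (t-s),\, s) }(x)\, \dd{s}.
\]
Applied to $F = \Psi(1-v_\Psi) - \tfrac{V}{2}\, v_\Psi^2$, this is exactly \eqref{eq:v-integral}.

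The main obstacle will be making the PDE derivation rigorous: the Taylor expansion of $u(\eta_h^x, r+h, t)$ requires enough regularity of $u$ in $(x,r)$ to apply the non-local generator $\mathcal{A}$ pointwise. For $\Psi \in \mathcal{S}(\mathbb{R}^{d+1})_+$ one expects this smoothness to be available, but a cleaner route that avoids the PDE entirely is to treat \eqref{eq:v-integral} directly as a fixed-point equation on the space of bounded, $[0,1]$-valued measurable functions on $\Rd\times\R\times[0,T]$: the right-hand side is Lipschitz in $v_\Psi$ with a controllable constant (thanks to the integration in $s$), so Picard iteration converges uniformly, and the same first-branching decomposition shows that the probabilistic $v_\Psi$ is a fixed point, identifying it as the unique one.
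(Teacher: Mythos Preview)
Your argument is correct. The bound \eqref{eq:vin[0,1]} is indeed trivial, and your derivation of the backward PDE via the short-time (first-branching) decomposition followed by Duhamel's principle is a valid route to \eqref{eq:v-integral}; the computation you outline checks out, and you rightly flag the regularity issue together with a clean fixed-point workaround.

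For comparison: the paper does not actually prove this lemma. It simply states that ``the proof follows by use of the Feynman--Kac formula and can be found in \cite[Section 3.3]{Bojdecki:2006ab}.'' In that reference the derivation proceeds by conditioning on the \emph{first} branching time $\tau$ (rather than expanding infinitesimally in $h$), which yields an integral equation for $u=1-v_\Psi$ directly, and the Feynman--Kac formula is then used to pass from the equation with the multiplicative functional $e^{-\int_0^\cdot \Psi}$ to the form involving $\T{t-s}$ in \eqref{eq:v-integral}. Your approach reaches the same destination via the PDE intermediate step; the first-branching-time route has the advantage of bypassing the pointwise-generator regularity issue you mention, since it works entirely at the level of integral equations.
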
 
The proof follows by use of the Feynman-Kac formula and can be find in \cite[Section 3.3]{Bojdecki:2006ab}. $v_\Psi$ is quite cumbersome to deal with hence we approximate it with $n_\Psi$ defined by
\begin{equation}
 n_{\Psi}(x,r,t) := \intc{t} \T{t-s} \Psi(\cdot,r+t-s) \dd{s}, \quad \Psi \in \mathcal{S}(\mathbb{R}^{d+1} ), x\in \Rd, r,t\geq0.\label{sol:tv}
\end{equation} 
Intuitively, $n_\Psi$ was obtained by dropping quadratic terms in \eqref{eq:v-integral} as they do not play a role when $\Psi$ is small. 
\paragraph{Notation} From now on we fix non-negative $\Phi\in \mathcal{S}(\R^{d+1})_+$ and prove convergences announced in the scheme in Section \ref{sec:scheme}. To make the proof shorter we will consider $\Phi$ of a special form.  The proof for general $\Phi$ goes the same lines but is more notationally cumbersome
\begin{equation}
 \Phi(x,s) := \varphi(x)\psi(s),  \quad  \varphi\in \SD_+,\:\psi \in \mathcal{S}(\mathbb{R})_+. \label{def:simplification}
\end{equation}
We also denote
\begin{equation}
 \varphi_{T}\left(x\right):=\frac{1}{F_{T}}\varphi\left(x\right), \:\chi(s) := \int_s^1\psi(u) \dd{u},\: \chi_{T}:=\chi\left(\frac{t}{T}\right). \label{def:notation-T}
\end{equation}
We write
\begin{equation*}
 \Psi(x,s) := \varphi(x)\chi(s),
\end{equation*}
\begin{equation}
  \Psi_{T}\left(x,s\right):=\frac{1}{F_{T}}\Psi\left(x,\frac{s}{T}\right) = \varphi_T(x)\chi_T(s). \label{def:simpl}
\end{equation}
Note that $\Psi$ and $\Psi_T$ are positive functions.
In the sequel, we also write
\begin{equation}
 v_T(x,r,t):=v_{\Psi_T}(x,r,t) \:\text{ and }\: v_T(x) := v_T(x,0,T), \label{eq:simpl1}
\end{equation}
and we define in the same way $n_T(x,r,t), n_T(x)$. Finally, by \eqref{eq:v-integral}, \eqref{sol:tv} for $\Psi\geq 0$ and by the fact that the potential operator of $\alpha$-stable motion exists for $d>\alpha$, it is straightforward to check 
\begin{equation}
  0\leq v_T(x) \leq n_T(x) \leq \frac{C_{\Psi}}{F_T},\quad \forall_{x\in \Rd}, \label{ineq:tv>v}
\end{equation}
and some $C_\Psi\geq 0$. We will also use the following simple estimate 
\begin{equation}
	n_T(x) - v_T(x) \cleq \frac{1}{F_T} n_T(x) +  \intc{T} \T{T-s} n_T^2(x,T-s,s) \dd{s}.\label{eq:n-v}
\end{equation}
which again follows from \eqref{eq:v-integral}, \eqref{sol:tv} and \eqref{ineq:tv>v}.
Let us fix $\Psi \in \SD_+$ and denote $v(\theta) = v_{\theta \Psi}$. Obviously $v(0)=0$. In the sequel we will need derivatives of $v$ with respect to $\theta$. Using \eqref{eq:v-integral} it is easy to calculate that (we omit arguments and integration variables) 
\begin{equation}
 v'(\theta) = \int_{0}^{t}\mathcal{T}_{t-s}\left[\Psi \left(1-v(\theta)\right)-\theta \Psi v'(\theta) -   V v(\theta)v'(\theta)  \right] \dd{s} .\label{eq:trata1}
\end{equation} 
When $\theta=0$ we have
\begin{equation}
 v'(0)(x,r,t) = \int_{0}^{t}\mathcal{T}_{t-s}\Psi(x,r+t-s) \dd{s} = n_\Psi(x,r,t). \label{eq:vprim}
\end{equation} 
We will also need the second derivative 
\[
	v''(\theta) = \int_{0}^{t}\mathcal{T}_{t-s}\left[ -2 \Psi v'(\theta) - \theta \Psi v''(\theta) -  V v(\theta)v''(\theta) -  V (v'(\theta))^2  \right] \dd{s}.
\]
When $\theta=0$ we have
\begin{equation}
	v''(0)(x,r,t) = - \int_{0}^{t}\mathcal{T}_{t-s}\left[2\Psi(\cdot,r+t-s) n_\Psi(\cdot, r+t-s,s) + V n_\Psi(\cdot, r+t-s,s)^2  \right] \dd{s}. \label{eq:vpp}
\end{equation}
By \eqref{def:v} and properties of the Laplace transform we have
\begin{equation*}
	\ev{} \rbr{\int_{0}^{t}\left\langle N_{s}^{x},\Psi\left(\cdot,s\right)\right\rangle \dd{s}}^2 = -v''(0)(x,0,t).
\end{equation*}
Formally the equation is true for $\Psi \in \mathcal{S}(\mathbb{R}^{d+1})_+$. Exploiting the Lebesgue dominated theorem one can easily argue that it is also true for $\Psi$ of the form $\Psi(x,s) = \varphi(x) 1_{[t_1, t_2]}$ yielding 
\begin{equation}
	\ev{} \rbr{\int_{t_1}^{t_2}\left\langle N_{s}^{x},\varphi \right\rangle \dd{s}}^2 = -v''(0)(x,0,t), 	 \label{eq:secondmome}
\end{equation}
with $v''$ defined according to \eqref{eq:vpp} with this special $\Psi$.

\subsection{Miscellaneous}
In this section we gather some simple facts used in the proof below. All functions in this section as assumed to be measurable
\begin{lemma}\label{lem:a2reformulations}
	Let us assume (A2). There exists a constant $C>0$ such that for any $\varphi:\R^d \mapsto \R_+$ we have
	\[
		\sup_t t^{d/\alpha-1} \int_{\mathcal{M}(\Rd)} \ddp{\T{t} \varphi }{\mu} Q_0(\dd{\mu} ) < c \norm{\varphi}{1}{}.
	\]
\end{lemma}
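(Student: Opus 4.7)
The plan is to reduce the integral against $Q_0$ to an integral against the intensity measure $\Lambda_0$ and then exploit the self-adjointness (symmetry) of the $\alpha$-stable transition density in order to apply assumption (A2) directly.

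First I would use the definition of the intensity measure $\Lambda_0 = \mathbb{E} Q_0$ (which is legitimate since $\mathcal{T}_t\varphi$ is a nonnegative measurable function) to rewrite
\[
\int_{\mathcal{M}(\Rd)} \ddp{\T{t}\varphi}{\mu}\, Q_0(\dd{\mu}) \;=\; \ddp{\T{t}\varphi}{\Lambda_0} \;=\; \intr \T{t}\varphi(x)\,\Lambda_0(\dd{x}).
\]
Then I would unfold $\T{t}\varphi$ via \eqref{eq:semigroup-density}, apply Fubini (all integrands are nonnegative), and use the symmetry $p_t(x-y)=p_t(y-x)$ to obtain
\[
\intr \T{t}\varphi(x)\,\Lambda_0(\dd{x}) \;=\; \intr \varphi(y) \rbr{p_t \ast \Lambda_0}(y)\,\dy.
\]

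From here the conclusion is immediate: bounding the convolution by its supremum norm gives
\[
\intr \varphi(y)(p_t \ast \Lambda_0)(y)\,\dy \;\leq\; \norm{p_t\ast \Lambda_0}{\infty}{}\,\norm{\varphi}{1}{},
\]
and multiplying by $t^{d/\alpha-1}$ and taking the supremum over $t$, assumption (A2) supplies a constant $C$ with $\sup_t t^{d/\alpha-1}\norm{p_t\ast\Lambda_0}{\infty}{} \leq C$, yielding the claim.

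The argument is essentially a one-line Fubini/duality manipulation, so there is no serious obstacle; the only point worth checking is that $\mathcal{T}_t\varphi$ is integrable against $\Lambda_0$ (so that the interchange is valid), but this is automatic from nonnegativity of the integrand, which allows an unconditional use of Tonelli's theorem even before invoking (A2).
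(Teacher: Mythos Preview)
Your proof is correct and follows essentially the same route as the paper: rewrite the $Q_0$-integral as an integral against the intensity measure $\Lambda_0$, expand $\T{t}\varphi$ via the density $p_t$, use Fubini and the symmetry of $p_t$ to recognise $p_t\ast\Lambda_0$, bound by $\norm{p_t\ast\Lambda_0}{\infty}{}\norm{\varphi}{1}{}$, and invoke (A2).
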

\begin{proof}
	Using \eqref{eq:semigroup-density} we have
	\begin{multline*}
			\int_{\mathcal{M}(\Rd)} \ddp{\T{t} \varphi }{\mu} Q_0(\dd{\mu} ) = \intr \T{t} \varphi(x) \Lambda_0( \dd{x}  ) = \intr \intr p_t(x-y) \varphi(y) \dd{y}  \Lambda_0( \dd{x}  ) \\
			= \intr  \varphi(y) (p_t\ast \Lambda_0)( \dd{y}  ) \leq \norm{p_t\ast\Lambda_0}{\infty}{} \norm{\varphi}{1}{}.
	\end{multline*}
	Now the proof follows directly by (A2).
\end{proof}

\begin{lemma}\label{lem:convergence}
	Let us assume (A2) and $n_T$ be given by \eqref{eq:simpl1} then
	\[
		\ddp{n_T}{\mu} \rightarrow  0 \text{ in } \mathcal{L}^1(Q_x).
	\]
	Moreover the convergence is uniform in $x$.
\end{lemma}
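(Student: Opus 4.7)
The plan is to exploit positivity and apply Lemma~\ref{lem:a2reformulations} after unfolding the definition of $n_T$. Since $n_T\geq 0$, the $\mathcal{L}^1(Q_x)$-norm coincides with $\int \ddp{n_T}{\mu}\, Q_x(\dd{\mu})$, so no absolute value is needed. First I would use the translation invariance $Q_x = \theta_x Q_0$ to rewrite
\[
\int \ddp{n_T}{\mu} Q_x(\dd{\mu}) = \int \ddp{n_T(\cdot - x)}{\mu} Q_0(\dd{\mu}).
\]
Because $p_t$ is a convolution kernel, $n_T(\cdot - x)$ is obtained from the definition \eqref{sol:tv} of $n_T$ simply by replacing $\varphi$ with its shift $\varphi(\cdot - x)$, whose $\mathcal{L}^1$-norm equals $\|\varphi\|_1$. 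This is the key step that yields uniformity in $x$.

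Next, using \eqref{eq:simpl1} and the change of variable $u = T-s$, write
\[
n_T(y-x) = \frac{1}{F_T}\int_0^T \T{u}[\varphi(\cdot - x)](y)\, \chi(u/T)\, \dd{u},
\]
and apply Fubini (all quantities non-negative) together with Lemma~\ref{lem:a2reformulations}:
\[
\int \ddp{\T{u}[\varphi(\cdot-x)]}{\mu}\,Q_0(\dd{\mu}) \;\leq\; C\, u^{1-d/\alpha}\, \|\varphi\|_1.
\]
Since $\chi$ is bounded on $[0,1]$, this yields
\[
\int \ddp{n_T}{\mu}\, Q_x(\dd{\mu}) \;\leq\; \frac{C\|\varphi\|_1\|\chi\|_\infty}{F_T}\int_0^T u^{1-d/\alpha}\,\dd{u} \;=\; C'\, T^{(1-d/\alpha)/2},
\]
where the integral converges at $u=0$ because $d<2\alpha$ (so $1-d/\alpha > -1$), and the exponent on $T$ comes from $F_T = T^{(3-d/\alpha)/2}$.

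Finally, since $d > \alpha$ we have $(1-d/\alpha)/2 < 0$, so the bound tends to $0$ as $T\to\infty$, and it is manifestly independent of $x$, giving the required uniform convergence. The only step that requires care is the translation-invariance reduction: one must verify that $Q_x = \theta_x Q_0$ correctly transfers the integral and that the shifted function $\varphi(\cdot-x)$ has the same $\mathcal{L}^1$-norm as $\varphi$, so that the estimate of Lemma~\ref{lem:a2reformulations} produces a constant that does not depend on $x$. Everything else is a direct matching of exponents dictated by the normalisation $F_T$.
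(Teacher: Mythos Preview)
Your proof is correct and follows essentially the same route as the paper: reduce to $Q_0$ (equivalently $\Lambda_0$) via translation invariance, unfold $n_T$ as a time integral of $\T{u}\varphi_T$, apply the (A2) bound (you use it through Lemma~\ref{lem:a2reformulations}, the paper invokes (A2) directly), and integrate in $u$ to obtain the $x$-independent bound $T^{(1-d/\alpha)/2}\to 0$. Your explicit remark that $d<2\alpha$ guarantees integrability of $u^{1-d/\alpha}$ near $0$ is a point the paper leaves implicit.
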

\begin{proof} Using similar calculations as in the previous lemma we have
	\begin{multline*}
		\ddp{n_T}{\mu}=\int_{\mathcal{M}(\Rd)}\ddp{n_T}{\mu} Q_x(\mu) = \int_{\Rd} n_T(y) \Lambda_x(\dd{y} ) =  \int_{\Rd}\intc{T} \T{s}\varphi_T(y) \dd{s}  \Lambda_x(\dd{y} ) \\=  \int_{\Rd}\intc{T} \T{s}\varphi_T(x+y) \dd{s}  \Lambda_0(\dd{y} )  
		= F_T^{-1}  \intc{T}  \int_{\Rd}\varphi(x+y)  (p_s\ast\Lambda_0)(\dd{y} ) \dd{s} \cleq F_T^{-1} \intc{T}  s^{1-\frac{d}{\alpha}} \dd{s} \\
		\ceq F_T^{-1} T^{2-\frac{d}{\alpha}} = T^{(1-\frac{d}{\alpha})/2} \rightarrow 0. 
	\end{multline*}
\end{proof}

\begin{lemma}\label{lem:derivation}
	Let $f_1,f_2:\Rd\mapsto \R_+$. Then 
	\[
		\intr  \int_{\mathcal{M}(\Rd)}  \T{s_1} f_1(x)   \ddp{ \T{s_2} f_2   }{\mu}   Q_x(\dd{\mu}) \dd{x} = \int_{\mathcal{M}(\Rd)} \ddp{\T{s_1+s_2}F^*}{\mu}   Q_0(\dd{\mu}),
	\]
	where $F^*(z) :=\intr f_1(-z+z_2)f_2(z_2) \dd{z_2}$.
\end{lemma}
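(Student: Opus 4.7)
The plan is to apply the Palm formula \eqref{eq:palmFormula} twice, use translation invariance of $Q$, and then reduce the problem to a semigroup/convolution identity for the transition density $p_t$.

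First, I would observe that $\ddp{\T{s_2}f_2}{\mu}$ is a nonnegative measurable functional of $\mu$ and apply \eqref{eq:palmFormula} with $h(x)=\T{s_1}f_1(x)$ and $G(\mu)=\ddp{\T{s_2}f_2}{\mu}$. This converts the outer $\dd x$/$Q_x$ integration into an integral against the unconditioned canonical measure, giving
\[
\text{LHS} = \int_{\mathcal{M}(\Rd)}\ddp{\mu}{\T{s_1}f_1}\,\ddp{\mu}{\T{s_2}f_2}\,Q(\dd{\mu}).
\]
Next, I would expand $\ddp{\mu}{\T{s_1}f_1} = \intr \T{s_1}f_1(z)\,\mu(\dd{z})$ and reinvoke \eqref{eq:palmFormula}, this time with $h(y)=\T{s_2}f_2(y)$ and $G(\mu)=\ddp{\mu}{\T{s_1}f_1}$, to obtain
\[
\text{LHS} = \intr \T{s_2}f_2(y)\int_{\mathcal{M}(\Rd)}\ddp{\mu}{\T{s_1}f_1}\,Q_y(\dd{\mu})\,\dd{y}.
\]

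Then I would invoke translation invariance $Q_y=\theta_y Q_0$ in the inner integral, which allows me to rewrite $\ddp{\mu}{\T{s_1}f_1}$ under $Q_y$ as an integral with respect to $\mu_0\sim Q_0$ of $\T{s_1}f_1$ evaluated at a shifted argument. An application of Fubini (justified by nonnegativity of every ingredient) then puts the identity in the form
\[
\text{LHS} = \int_{\mathcal{M}(\Rd)} \int_{\Rd} \left[\,\intr \T{s_2}f_2(y)\,\T{s_1}f_1(z+y)\,\dd{y}\,\right] \mu_0(\dd{z})\,Q_0(\dd{\mu_0}).
\]
The bracketed inner integral is purely analytic and independent of $Q_0$.

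Finally, I would show that the bracketed quantity equals $\T{s_1+s_2}F^*(z)$. This is a direct calculation: write $\T{s_1}f_1 = p_{s_1}\ast f_1$ and $\T{s_2}f_2 = p_{s_2}\ast f_2$, apply Fubini to isolate an integral of $p_{s_1}(\cdot)p_{s_2}(\cdot)$ in a single variable, and invoke the semigroup identity \eqref{eq:conv} together with the symmetry $p_t(-x)=p_t(x)$ to collapse it into $p_{s_1+s_2}$ evaluated at the appropriate argument. Matching this against the expansion of $\T{s_1+s_2}F^*(z)$ obtained by unfolding the definition $F^*(z)=\intr f_1(-z+z_2)f_2(z_2)\,\dd{z_2}$ then completes the identification. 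The only real obstacle is careful bookkeeping of shift directions (so that the argument of $f_1$ inside $F^*$ is $-z+z_2$ and not $z-z_2$); once the two applications of the Palm formula are in place, everything else is a routine change of variables.
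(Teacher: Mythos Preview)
Your approach is sound, but it takes a longer path than the paper. The paper does not apply the Palm formula at all here: it immediately invokes $Q_x=\theta_x Q_0$ on the left-hand side, expands $\ddp{\T{s_2}f_2}{\theta_x\mu}$ as a sum over atoms $y\in[\mu]$, and then reaches the result by two successive substitutions ($x\to x+z_1$ followed by $z_1\to z_1+z_2$) together with \eqref{eq:conv} and the symmetry of $p_t$. Your two applications of \eqref{eq:palmFormula} are valid but somewhat circular: the first sends $Q_x$ back to $Q$, and the second sends $Q$ to $Q_y$, so the net effect is the original integral with the roles of $(s_1,f_1)$ and $(s_2,f_2)$ interchanged, after which you still have to apply $Q_y=\theta_y Q_0$ exactly as the paper does.

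That interchange has a concrete consequence you should be aware of. The bracketed integral $\int \T{s_2}f_2(y)\,\T{s_1}f_1(z+y)\,\dd y$ actually equals $\T{s_1+s_2}F^*(-z)$, not $\T{s_1+s_2}F^*(z)$; to see this quickly, set $s_1=s_2=0$ and compare $\int f_2(y)f_1(z+y)\,\dd y$ with $F^*(z)=\int f_1(z_2-z)f_2(z_2)\,\dd{z_2}$. After integrating against $Q_0$ the discrepancy disappears, because the Palm intensity $\Lambda_0$ of a translation-invariant point process is automatically reflection-symmetric, but this is an extra fact you would have to supply and is not the ``routine change of variables'' you anticipate. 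The paper's direct route keeps $f_1,f_2$ in their original positions throughout and so sidesteps this issue.
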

\begin{proof}
	We recall that $Q_x = \theta_x Q_0$, hence
	\begin{equation*}
		 \intr  \int_{\mathcal{M}(\Rd)}  \T{s_1} f_1(x)   \ddp{ \T{s_2} f_2   }{\mu}   Q_x(\dd{\mu}) \dd{x} =  \intr  \int_{\mathcal{M}(\Rd)}  \T{s_1} f_1(x)   \ddp{ \T{s_2} f_2   }{\theta_x \mu}   Q_0(\dd{\mu}) \dd{x} =(*). 
	\end{equation*}
	Using \eqref{eq:semigroup-density}, the fact that $Q_0$ is a simple point random measure and the Fubini theorem again we obtain 
	\[
	(*)=  \int_{\mathcal{M}(\Rd)} \sum_{y \in [\mu]} \intr \intr \intr p_{s_1}(x-z_1)  f_1(z_1) p_{s_2}(x+y-z_2)  f_2(z_2) \dd{x} \dd{z_1} \dd{z_2}     Q_0(\dd{\mu}) =(*).
	\]
	In the next step we substitute $x\rightarrow x+z_1$, use the semigroup property \eqref{eq:conv} and symmetry of $p_s$ to  obtain
	\begin{multline*}
		(*) = \int_{\mathcal{M}(\Rd)} \sum_{y \in [\mu]} \intr \intr \intr p_{s_1}(x)  f_1(z_1) p_{s_2}(x+y+z_1-z_2)  f_2(z_2) \dd{x} \dd{z_1} \dd{z_2}     Q_0(\dd{\mu}) \\
		=\int_{\mathcal{M}(\Rd)} \sum_{y \in [\mu]} \intr \intr f_1(z_1) p_{s_1+s_2}(x+y+z_1-z_2)  f_2(z_2) \dd{x} \dd{z_1} \dd{z_2} Q_0(\dd{\mu}) =(*).
	\end{multline*}
	Further, substitute $z_1\rightarrow z_1 +z_2$ and  we write 
	\begin{multline*}
		(*)= \int_{\mathcal{M}(\Rd)} \sum_{y \in [\mu]} \intr \intr   f_1(z_1+z_2) p_{ (s_1+s_2)}(y+z_1)  f_2(z_2)  \dd{z_1} \dd{z_2}     Q_0(\dd{\mu})  \\
		=\int_{\mathcal{M}(\Rd)} \sum_{y \in [\mu]} \intr   p_{ (s_1+s_2)}(y+z_1)  F^*(-z_1)  \dd{z_1}     Q_0(\dd{\mu}) = \int_{\mathcal{M}(\Rd)} \ddp{\T{s_1+s_2}F^*}{\mu}   Q_0(\dd{\mu}). 
	\end{multline*}
\end{proof}

We present also a simple fact concerning moments of a IDPP random measures. 
\begin{lemma}\label{lem:ippdSecondMoment}
	Let $M$ be a IDPP random measure with canonical measure $Q$. Let $f:\Rd\mapsto \R_+$ be such that $\ev{} \ddp{M}{f}^2 < +\infty$ then
	\[
		\ev{} \ddp{M}{f}  = \int_{\mathcal{M}(\Rd)} \ddp{f}{\mu} Q(\mu).
	\]
	\[
		\ev{} \ddp{M}{f}^2  = \rbr{\int_{\mathcal{M}(\Rd)} \ddp{f}{\mu} Q(\mu) }^2 + \rbr{\int_{\mathcal{M}(\Rd)} \ddp{f}{\mu}^2 Q(\mu) }.
	\]
\end{lemma}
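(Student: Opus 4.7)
The plan is to obtain both formulas by twice differentiating the Lévy--Khinchine representation \eqref{eq:LevyKhinchine} with respect to a scalar parameter and evaluating at zero. Concretely, for $\theta \geq 0$ set $\varphi := \theta f$ and define
\[
\Phi(\theta) := \mathbb{E}\exp\{-\theta \langle M, f\rangle\} = \exp\{-\Psi(\theta)\}, \qquad \Psi(\theta) := \int_{\mathcal{M}(\mathbb{R}^d)} \bigl(1 - e^{-\theta\langle f,\mu\rangle}\bigr)\, Q(\mathrm{d}\mu).
\]
Since $\langle M, f\rangle \geq 0$, the function $\theta \mapsto \Phi(\theta)$ is smooth for $\theta > 0$, and $-\Phi'(0^+) = \mathbb{E}\langle M, f\rangle$, $\Phi''(0^+) = \mathbb{E}\langle M, f\rangle^2$, provided the right-hand moments are finite, which we have by hypothesis.

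Formally, $\Psi'(\theta) = \int \langle f,\mu\rangle e^{-\theta \langle f,\mu\rangle} Q(\mathrm{d}\mu)$ and $\Psi''(\theta) = -\int \langle f,\mu\rangle^2 e^{-\theta \langle f,\mu\rangle} Q(\mathrm{d}\mu)$, so at $\theta = 0$ (using $\Psi(0) = 0$)
\[
\Phi'(0) = -\Psi'(0) = -\int \langle f,\mu\rangle\, Q(\mathrm{d}\mu),
\qquad
\Phi''(0) = \Psi'(0)^2 - \Psi''(0) = \Bigl(\int \langle f,\mu\rangle\, Q(\mathrm{d}\mu)\Bigr)^2 + \int \langle f,\mu\rangle^2\, Q(\mathrm{d}\mu),
\]
which are precisely the two claimed identities.

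The only subtlety is to justify differentiating under the integral sign defining $\Psi$. For the first derivative this is immediate by monotone convergence applied to the difference quotient $\theta^{-1}(1 - e^{-\theta\langle f,\mu\rangle}) \uparrow \langle f,\mu\rangle$ as $\theta \downarrow 0$, together with finiteness of $\mathbb{E}\langle M,f\rangle = \int \langle f,\mu\rangle Q(\mathrm{d}\mu)$ (which itself follows by first taking $\log$ on both sides of \eqref{eq:LevyKhinchine} and letting $\varphi = \theta f$, $\theta \downarrow 0$). For the second derivative, the hypothesis $\mathbb{E}\langle M,f\rangle^2 < +\infty$ translates—via the standard second-moment identity for infinitely divisible processes—into $\int \langle f,\mu\rangle^2 Q(\mathrm{d}\mu) < +\infty$, and then $\langle f,\mu\rangle^2 e^{-\theta \langle f,\mu\rangle}$ provides the required integrable dominant on $[0,1]$ so that dominated convergence yields $\Psi''(0) = -\int \langle f,\mu\rangle^2 Q(\mathrm{d}\mu)$.

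I expect the technically most delicate step to be this interchange justification—specifically, establishing a priori that $\int \langle f,\mu\rangle^2 Q(\mathrm{d}\mu) < +\infty$ from the assumption $\mathbb{E}\langle M,f\rangle^2 < +\infty$. The cleanest way is to first truncate $f$ by $f_n := f \wedge n \cdot \mathbf{1}_{B(0,n)}$, apply the identity to each $f_n$ (where integrability is automatic because $\langle M, f_n\rangle$ is bounded), and then pass to the limit by monotone convergence on both sides; the finiteness of the left-hand side under $\mathbb{E}\langle M, f\rangle^2 < +\infty$ forces the finiteness of $\int \langle f,\mu\rangle^2 Q(\mathrm{d}\mu)$, and the identity survives the limit.
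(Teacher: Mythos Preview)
Your proposal is correct and is precisely the approach the paper has in mind: the paper omits the proof entirely, stating only that it ``follows by standard application of the properties of the Laplace transform,'' and differentiating the L\'evy--Khinchine formula \eqref{eq:LevyKhinchine} twice at $\theta=0$ is exactly that standard application. Your added care about justifying the interchanges (monotone convergence for the first derivative, truncation plus monotone convergence for the second) goes beyond what the paper supplies and is the right way to make the argument rigorous.
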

We omit the proof which follows by standard application of the properties of the Laplace transform.

%
\section{Calculations} \label{sec:calculations}
In this section we execute calculations announced in the proof scheme described in Section \ref{sec:scheme}
\subsection{The Laplace transform} 
\label{sub:the_laplace_transform}
In this section we calculate the Laplace transform of the space-time variable corresponding to $X_T$. Let us recall definitions \eqref{eq:fluctuatons_process} and \eqref{eq:space-time-method}. Performing simple calculations we obtain
\[
	L(\Phi) = \lap{-\ddp{\tilde{X}_T}{\Phi}} =  \exp\cbr{\intc{T} \ddp{\Psi_T(\cdot, s)}{\lambda} \dd{s}} \lap{ -\intc{T} \ddp{N_s}{\Psi_T(\cdot, s)} \dd{s}  }.
\]
Next we condition with respect to the starting measure $M$ and apply \eqref{def:v}
\begin{multline*}
 {\ev{\rbr{\left.\exp{\cbr{-\intc{T}\ddp{N_{s}}{\Psi_T(\cdot,s)}\dd{s}}}\right|M}}} = \prod_{x \in [M]} \ev{\exp{\cbr{-\intc{T}\ddp{N^x_{s}}{\Psi_T(\cdot,s)}\dd{s}}}} = \prod_{x \in [M]} \rbr{1-v_{T}(x)}.\quad\quad\quad
\end{multline*}
Now using identity $\prod_{x \in [M]} \rbr{1-v_{T}(x)} = \exp\cbr{\ddp{M}{\ln(1-v_T)}}$ and \eqref{eq:LevyKhinchine} we obtain
\[
	L(\Phi) = \exp\cbr{\intr \intc{T} \Psi_T(x,s) \dd{s} \dd{x}}\exp\cbr{-\int_{\mathcal{M}(\Rd)} \rbr{1-e^{\ddp{\ln(1-v_T)}{\mu}}} Q(\dd{\mu}) }.
\]
Let us denote the exponent of the expression above by $I(T)$, viz. $\lap{-\ddp{\tilde{X}_T}{\Phi}}= \exp\cbr{I(T)}$.
We introduce the following decomposition
\begin{multline*}
	I(T) = \underbrace{\intr \intc{T} \Psi_T(x,s) \dd{s} \dd{x}- \intr v_T(x) \dd{x}}_{I_1(T)}  + \underbrace{\intr  \ln(1-v_T(x))+v_T(x) \dd{x}}_{I_2(T)} \\
 	- \underbrace{\int_{\mathcal{M}(\Rd)} \rbr{1+\ddp{\ln(1-v_T)}{\mu}- e^{\ddp{\ln(1-v_T) }{\mu}} } Q(\dd{\mu})}_{I_3(T)},
\end{multline*}
where between the lines we used the identity $\int_{\mathcal{M}(Rd)} \ddp{f}{\mu} = \intr f(x) \dd{x} $ ($\lambda$ is the intensity measure of $Q$). The proof will be completed once we obtain
\[
	I_1(T) \rightarrow \frac{K_1}{2} \intc{1} \intc{1} \intr \intr \Phi(x,t) \Phi(x,t) \dd{x} \dd{y} C_h(s,t) \dd{s} \dd{t}   ,\: \text{ as } T\rightarrow \infty. 
\] 
\begin{equation*}
	I_2(T) \rightarrow 0,\: \text{ as } T\rightarrow \infty. 
\end{equation*}
\begin{equation}
	I_3(T) \rightarrow -  \mathcal{H}(Q) \frac{K_2}{2} \intc{1} \intc{1} \intr \intr \Phi(x,t) \Phi(x,t) \dd{x} \dd{y} c_h(s,t) \dd{s} \dd{t},\: \text{ as } T\rightarrow \infty.  \label{eq:I4conv}
\end{equation}
where $K_1$ and $K_2$ are the same as in Theorem \ref{thm:starting}.   For now using the above limits we deduce
\[
	I(T) \rightarrow \frac{1}{2}\intc{1} \intc{1} \intr \intr \Phi(x,t) \Phi(x,t) \dd{x} \dd{y}\sbr{ K_1 C_h(s,t) + \mathcal{H}(Q) K_2 c_h(s,t)} \dd{s} \dd{t}.
\] 
It is easy to check that this concludes the proof of \eqref{lim:laplace}. Therefore according to scheme from Section \ref{sec:scheme} we are left of the proof of tightness. This is deferred to Section \ref{sec:tightness} and now we proceed to the proof of the converges announced above.

\paragraph{Convergence of $I_1$} 
\label{par:paragraph_name}
This convergence can be recovered easily by inspecting \cite[(3.23)-(3.30)]{Bojdecki:2006ab}.
\paragraph{Convergence of $I_2$} 
Inequality \eqref{ineq:tv>v} entitles us to use of the inequality $|\ln(1-x)+x| \leq x^2, x\in(0,1/2)$. Whence by \eqref{ineq:tv>v} and \eqref{sol:tv} we have
\[
	|I_2(T)| \cleq \intr  v_T^2(x) \dd{x} \leq \intr  n_T^2(x) \dd{x} \cleq \intr \rbr{\intc{T} \T{s} \varphi_T(x) \dd{s}}^2  \dd{x}.
\]
Using \eqref{eq:fourier.}, rearranging, using the Fubini theorem and recalling \eqref{def:notation-T} we arrive at
\[
	|I_2(T)| \cleq \intr |\widehat{\varphi}_T(z)|^2 \intc{T} \intc{T} e^{-s_1|z|^\alpha} e^{-s_2|z|^\alpha} \dd{s_1} \dd{s_2} \dd{z} \leq F_T^{-2}   \intr  \frac{|\widehat{\varphi}(z)|^2}{|z|^{2\alpha}} \rbr{1-e^{-T|z|^\alpha}}^2\dd{z}. 
\]
We now introduce an inequality which will be used a few times. Namely for any $\delta \in (0,1]$ we have
\begin{equation}
	1-e^{-x} \leq x^\delta,\: \forall_{x>0}. \label{eq:ineq-exp}
\end{equation}
Now using the inequality with $\delta = 1/2$ and recalling that $F_T^2 = T^{3-d/\alpha}$ we obtain
\[
	|I_2(T)|  \leq T^{d/\alpha -2}  \intr  \frac{|\widehat{\varphi}_T(z)|^2}{|z|^{\alpha}} \dd{z} \rightarrow 0,\quad \text{as }T\rightarrow +\infty.
\]
where the last integral is finite since $d>\alpha$ and $\widehat{\varphi} \in \SD$.
\paragraph{Convergence of $I_3$} 
\label{par:convergence_of_i_3_}
Using the Palm formula \eqref{eq:palmFormula} we write
\[
	I_3(T) = \intr \ln(1-v_T(x,0,T)) \int_{\mathcal{M}(\Rd)} \frac{{1+\ddp{\ln(1-v_T)}{\mu}- e^{\ddp{\ln(1-v_T) }{\mu}} }}{ \ddp{\ln(1-v_T) }{\mu} } Q_x(\dd{\mu}) \dd{x},
\]
where $Q_x$ is the Palm measure of measure $Q$. Let us recall that $n_T \approx v_T$, we know also that for small $x$ we have $\ln(1-x) \approx -x$ and $\frac{1-e^{x} + x}{x} \approx -\frac{x}{2}$. Hence $I_3$ is well-approximated by 
\[
	I_{31}(T) := -\frac{1}{2} \intr n_T(x) \int_{\mathcal{M}(\Rd)} \ddp{n_T(x)}{\mu} Q_x(\dd{\mu}) \dd{x}.
\]
First, we will prove first the convergence of $I_{31}$ to the same limit as in \eqref{eq:I4conv}. In the second step we will prove that $|I_3(T) - I_{31}(T)| \rightarrow 0$. We apply definition of $n_T$ (recall  \eqref{sol:tv} and \eqref{eq:simpl1})
\[
	I_{31}(T) = -\frac{1}{2}\intr \intc{T} \T{s_1} \varphi_T(x) \chi_T(s_1) \dd{s_1}  \int_{\mathcal{M}(\Rd)} \ddp{ \intc{T}  \T{s_2} \varphi_T \chi_T(s_2) \dd{s_2} }{\mu}   Q_x(\dd{\mu}) \dd{x}.
\]
Using the Fubini theorem we get (all functions are positive)
\[
	I_{31}(T) = -\frac{1}{2} \intc{T} \intc{T} \chi_T(s_1) \chi_T(s_2) \intr \int_{\mathcal{M}(\Rd)}   \T{s_1} \varphi_T(x)  \ddp{  \T{s_2} \varphi_T   }{\mu}   Q_x(\dd{\mu}) \dd{x} \dd{s_2} \dd{s_1}.
\]
Let us substitute $s_1 \rightarrow T s_1 $ $s_2 \rightarrow T s_2 $ and recall \eqref{def:notation-T}. Using them we acquire
\[
	I_{31}(T) = -\frac{1}{2} \frac{T^2}{F_T^2} \intc{1} \intc{1} \chi(s_1) \chi(s_2) \intr  \int_{\mathcal{M}(\Rd)}  \T{T s_1} \varphi(x)   \ddp{ \T{T s_2} \varphi   }{\mu}   Q_x(\dd{\mu}) \dd{x} \dd{s_2} \dd{s_1}. 
\]
Using Lemma \ref{lem:derivation} with $f_1=f_2=\varphi$ and $F^{*2}(z):= \intr \varphi(-z+z_2) \varphi(z_2) \dd{z_2}$ we obtain
\[
	I_{31}(T) = -\frac{1}{2} \frac{T^2}{F_T^2} \intc{1} \intc{1} \chi(s_1) \chi(s_2) \int_{\mathcal{M}(\Rd)} \ddp{\T{s_1+s_2}F^*}{\mu}   Q_0(\dd{\mu}) \dd{s_2} \dd{s_1}.
\]
We also recall that $F_T^2 = T^{3-d/\alpha}$, hence
\[
	I_{31}(T) = -\frac{1}{2}  \intc{1} \intc{1} \chi(s_1) \chi(s_2)  (s_1+s_2)^{1-d/\alpha} (T(s_1+s_2))^{d/\alpha-1}\int_{\mathcal{M}(\Rd)} \ddp{\T{T\rbr{s_1+s_2}} F^{*} }{\mu} Q_0(\dd{\mu}) \dd{s_2} \dd{s_1}.
\]
By (A3) the inner expression converges to $\mathcal{H}(Q)\norm{F^*}{1}{}=\norm{\varphi}{1}{2}$ and (A2) vindicates use of the Lebesgue dominated converge theorem, therefore
\[
	\lim_{T\rightarrow +\infty} I_{31}(T) = -\frac{1}{2} \mathcal{H}(Q)  \norm{\varphi}{1}{2} \intc{1} \intc{1} \chi(s_1) \chi(s_1)  (s_1+s_2)^{1-d/\alpha} \dd{s_2} \dd{s_1}. 
\]
Let us now consider the inner integral and recall \eqref{def:notation-T}
\begin{multline*}
	\intc{1} \intc{1} \chi(s_1) \chi(s_1)  (s_1+s_2)^{1-d/\alpha} \dd{s_2} \dd{s_1} = \intc{1} \intc{1} \int_{s_1}^1 \int_{s_2}^1 \psi(u_1) \psi(u_2) \dd{u_1} \dd{u_2}    (s_1+s_2)^{1-d/\alpha} \dd{s_2} \dd{s_1} \\
	= \intc{1} \intc{1} \intc{1} \intc{1} 1_{[s_1,1]}(u_1) 1_{[s_2,1]}(u_2) \psi(u_1) \psi(u_2)    (s_1+s_2)^{1-d/\alpha} \dd{u_1} \dd{u_2}  \dd{s_2} \dd{s_1}\\
		= \intc{1} \intc{1} \psi(u_1) \psi(u_2) \intc{1} \intc{1} 1_{[s_1,1]}(u_1) 1_{[s_2,1]}(u_2)     (s_1+s_2)^{1-d/\alpha}   \dd{s_2} \dd{s_1} \dd{u_1} \dd{u_2} \\
		= (2-d/\alpha)^{-1} \intc{1} \intc{1} \psi(u_1) \psi(u_2) \intc{1} 1_{[s_1,1]}(u_1)    \sbr{ (s_1+u_2)^{2-d/\alpha}-s_1^{2-d/\alpha} }   \dd{s_1} \dd{u_1} \dd{u_2} \\ = \rbr{(3-d/\alpha)(2-d/\alpha)}^{-1} \intc{1} \intc{1} \psi(u_1) \psi(u_2) \sbr{(u_1 + u_2)^{3-d/\alpha} - u_1^{3-d/\alpha} -u_2^{3-d/\alpha}} \dd{u_1} \dd{u_2}  \\
		= \rbr{(3-d/\alpha)(2-d/\alpha)}^{-1} \intc{1} \intc{1} \psi(u_1) \psi(u_2) c_h(u_1, u_2)  \dd{u_1} \dd{u_2},
\end{multline*} 
where $h=3-d/\alpha$. This is exactly the same as limit \eqref{eq:I4conv}. To conclude we still have to prove that $|I_3(T) - I_{31}(T)| \rightarrow 0$. This will be done in a few steps in which we will gradually change formula from $I_{3}$ toward $I_{31}$ proving each time that they are asymptotically the same. Let us denote 
\[
	I_{32}(T) := \intr \ln(1-v_T(x)) \int_{\mathcal{M}(\Rd)}  -\frac{1}{2} \ddp{\ln(1-v_T(x))}{\mu} Q_x(\dd{\mu}) \dd{x}. 
\]
Therefore we may estimate
\begin{equation}
	 I_{3}(T)-I_{32}(T) = \intr \ln(1-v_T(x,0,T)) \int_{\mathcal{M}(\Rd)} Z_T  Q_x(\dd{\mu})\dd{x}, \label{eq:z32Z3} 
\end{equation}
where
\[
	Z_T := \frac{{1+\ddp{\ln(1-v_T)}{\mu}- e^{\ddp{\ln(1-v_T(\cdot, 0, T)) }{\mu}} }}{ \ddp{\ln(1-v_T) }{\mu} } + \frac{1}{2}\ddp{\ln(1-v_T)}{\mu}.
\]
Let us notice that \eqref{ineq:tv>v} and Taylor expansion imply that $|Z_T| \cleq \min(\ddp{n_T}{\mu},\ddp{n_T}{\mu}^2) $, therefore by Lemma \ref{lem:convergence} we know that $\ddp{n_T}{\mu} \rightarrow 0$ in probability under $Q_x$ uniformly in $x$. By the diagonal procedure we may construct $H_T \rightarrow 0$ such that $\sup_{x} Q_x(\ddp{n_T}{\mu} >H_T) \rightarrow 0$. We now write
\begin{multline*}
	|I_{32}(T) - I_{3}(T)| = \intr \ln(1-v_T(x)) \int_{\mathcal{M}(\Rd)} Z_T 1_{\cbr{\ddp{n_T}{\mu} \leq H_T}}  Q_x(\dd{\mu})\dd{x} +\\
		\intr \ln(1-v_T(x)) \int_{\mathcal{M}(\Rd)} Z_T 1_{\cbr{\ddp{n_T}{\mu} > H_T}}  Q_x(\dd{\mu})\dd{x}. 
\end{multline*}
We have $|\ln(1-x)| \cleq  |x|$. Using this the above considerations, \eqref{ineq:tv>v} and the H\"older inequality we get
\begin{multline*}
	|I_{32}(T) - I_{3}(T)| \cleq H_T \intr \ln(1-v_T(x)) \int_{\mathcal{M}(\Rd)} \ddp{n_T}{\mu}  Q_x(\dd{\mu})\dd{x} \\
		+\sup_x Q_x(Z_T >H_T)^{\epsilon/(1+\epsilon)} \intr \ln(1-v_T(x)) \cbr{\int_{\mathcal{M}(\Rd)} Z_T^{1+\epsilon}   Q_x(\dd{\mu})}^{1/(1+\epsilon)}   \dd{x}. 
\end{multline*} 
where $\epsilon$ is the same as in assumption (A4). Applying (A4) and $|\ln(1-v_T)| \leq n_T$ we arrive at 
\[
	|I_{32}(T) - I_{3}(T)| \cleq H_T I_{31}(T)  +
		\sup_x Q_x(Z_T >H_T)^{\epsilon/(1+\epsilon)} I_{31}(T) \rightarrow 0.
\]
Next we define
\[
	I_{33}(T) := \frac{1}{2} \intr v_T(x) \int_{\mathcal{M}(\Rd)} \ddp{\ln(1-v_T)}{\mu}  Q_x(\dd{\mu}) \dd{x}. 
\]
Using the inequality $|\ln(1-x) + x| \cleq x^2$ valid for $x\in (0,1/2)$ in conjunction with \eqref{ineq:tv>v} we get
\[
	|I_{33}(T) - I_{32}(T)| \cleq \intr v_T(x)^2 \int_{\mathcal{M}(\Rd)} \ddp{\ln(1-v_T)}{\mu}  Q_x(\dd{\mu}) \dd{x}.
\]
Applying \eqref{ineq:tv>v} and calculating as before we have
\[
	|I_{33}(T) - I_{32}(T)| \cleq \frac{I_{31}(T)}{F_T} \rightarrow 0.
\]
We define also
\[
		I_{34}(T) := -\frac{1}{2} \intr v_T(x) \int_{\mathcal{M}(\Rd)} \ddp{v_T}{\mu} Q_x(\dd{\mu}) \dd{x}.
\]
By \eqref{ineq:tv>v} and  the Taylor formula we have $|\log(1-v_T(x))+v_T(x)|\leq v_T(x)^2 \leq F_T^{-1}n_T(x)$. Now analogously as before we prove  
\[
	|I_{33}(T) - I_{34}(T)| \rightarrow 0.
\]
Further we put
\[
	I_{35}(T) := -\frac{1}{2} \intr n_T(x) \int_{\mathcal{M}(\Rd)} \ddp{v_T}{\mu} Q_x(\dd{\mu}) \dd{x}.
\]
Using inequalities \eqref{eq:n-v} and \eqref{ineq:tv>v} it is easy to get $|I_{34}(T) - I_{35}(T)| \cleq I_{351}(T) + I_{352}(T)$ where
\[
	I_{351}(T) =  \intr \frac{1}{F_T} n_T(x)  \int_{\mathcal{M}(\Rd)} \ddp{v_T}{\mu} Q_x(\dd{\mu}) \dd{x} \leq \frac{1}{F_T} I_{31}(T) \rightarrow 0.
\]
and in the second term is (we use \eqref{ineq:tv>v} under integral $\int_{\mathcal{M}(\Rd)}$)
\[
	I_{352}(T) \cleq  \intr \intc{T} \T{T-s} n_T^2(x,T-s,s) \dd{s}  \int_{\mathcal{M}(\Rd)} \ddp{n_T}{\mu} Q_x(\dd{\mu}) \dd{x}.
\]
It is easy to check that $n_T(x,T-s,s) \cleq n_T(x)$. Using this, \eqref{sol:tv} and the Fubini theorem we write
\[
	I_{352}(T) \cleq   \intc{T}\intc{T} \intr\int_{\mathcal{M}(\Rd)} \T{T-s_1}  n_T^2(x) \ddp{\T{s_2}\varphi_T}{\mu} Q_x(\dd{\mu}) \dd{x} \dd{s_1} \dd{s_2} .
\]
Using Lemma \ref{lem:derivation} with  $f_1 = n_T^2, f_2 = \varphi_T$ and $F^*_T(z) =\intr n_T^2(-z+z_2)\varphi_T(z_2) \dd{z_2}$ we obtain
\begin{equation}
	I_{352}(T) \cleq   \intc{T}\intc{T} \int_{\mathcal{M}(\Rd)} \ddp{\T{T-s_1+s_2}F^*_T}{\mu} Q_0(\dd{\mu}) \dd{s_1} \dd{s_2}. \label{eq:tmp18}
\end{equation}
By Lemma \ref{lem:a2reformulations} we have
\[
	I_{352}(T) \cleq \intc T\intc T(T-s+u)^{1-d/\alpha}\norm{F^*_{T}}{1}{}\dd{s}\dd{s_2}Q_{0}(\dd{\mu}).
\]
Using simple calculations and \eqref{eq:fourier.} we get 
\begin{multline*}
\norm{F^*_{T}}{1}{}=\int_{\R^{d}}n_{T}(x)^{2}\dd{x}\int_{\R^{d}}\varphi_{T}(x)\dd{ x} \ceq F_{T}^{-3}\int_{\R^{d}}\intc{ T}\intc{T}\T{u_{1}}\varphi(x)\T{u_{2}}\varphi(x)\dd{u_{1}}\dd{ u_{2}}\dd{x} 
\\=F_{T}^{-3}\intr|\hat{\varphi}(z)|^{2}\rbr{\frac{1}{|z|^{\alpha}}\rbr{1-e^{-T|z|^{\alpha}}}}^{2}\dd z.
\end{multline*}
Let  \eqref{eq:ineq-exp} with $\delta=1-\frac{1}{2}\frac{d}{\alpha}+\epsilon$. One easily checks that for $\epsilon$ small enough we have $\delta\in (0,1)$. Hence by \eqref{eq:ineq-exp} follows
\[
	\norm{F^*_{T}}{1}{}\cleq F_{T}^{-3}T^{(2-d/\alpha)+2\epsilon}\intr\frac{|\hat{\varphi}(z)|^{2}}{|z|^{d-2\alpha \epsilon }}\dd z=T^{(d/\alpha-5)/2+2\epsilon}.
\]
Further
\begin{multline*}
	I_{352}(T) \cleq T^{(d/\alpha-5)/2+2\epsilon} \intc T\intc T(T-s_1+s_2)^{1-d/\alpha}\dd{s_1}\dd{s_2}Q_{0}(\dd{\mu})\\
	\leq T^{(d/\alpha-5)/2-2\epsilon}T^{3-d/\alpha} = T^{(1-d/\alpha)/2 +2\epsilon} \rightarrow 0,
\end{multline*}
if only $\epsilon$ is small enough. Therefore
\[
	|I_{34}(T) - I_{35}(T)| \rightarrow 0.
\]
Finally, let us notice that by inequality \eqref{eq:n-v} we have $|I_{35}(T) - I_{31}(T)| \cleq I_{361}(T) + I_{362}(T)$ where
\[
	I_{361}(T) = \intr n_T(x) \int_{\mathcal{M}(\Rd)} \ddp{\frac{1}{F_T} n_T(x)}{\mu} Q_x(\dd{\mu}) \dd{x} = 2 \frac{I_{31}(T)}{F_T} \rightarrow 0,
\]
and 
\[
	I_{362}(T) =  \intr n_T(x) \int_{\mathcal{M}(\Rd)} \ddp{\intc{T}\T{T-s} n^2_T(\cdot,T-s,s) \dd{s} }{\mu} Q_x(\dd{\mu}) \dd{x}.
\]
Using the Fubini theorem, inequality $n^2_T(x,T-s,s)\leq n^2_T(x)$ and \eqref{sol:tv} we get 
\[
	I_{362}(T) \leq \intc{T}\intc{T} \intr  \int_{\mathcal{M}(\Rd)} \T{s_1}\varphi_T(x) \ddp{\T{T-s_2} n^2_T  }{\mu} Q_x(\dd{\mu}) \dd{x} \dd{s_1} \dd{s_2}.
\]
Using Lemma \ref{lem:derivation} with $f_1 = \varphi_T, f_2 = n_T^2$ and $F^*_T(z) =\intr n_T^2(-z+z_2)\varphi_T(z_2) \dd{z_2}$ we obtain
\[
	I_{362}(T) \cleq   \intc{T}\intc{T} \int_{\mathcal{M}(\Rd)} \ddp{\T{T-s_2+s_1}F^*_T}{\mu} Q_0(\dd{\mu}) \dd{s_1} \dd{s_2}.
\]
This is the same as \eqref{eq:tmp18}. Therefore $I_{362}(T) \rightarrow 0$ and the proof is concluded.

\subsection{Tightness} 
\label{sec:tightness}
In the whole section we fix $0\leq t_1<t_2\leq 1$. Our aim is to compute the second moment of the increment, viz. $\ev{}(\ddp{X_T(t_2)}{\varphi} - \ddp{X_T(t_1)}{\varphi})^2 $ and proof \eqref{ineq:tightness}. In this section we use the following definition of $n_T$
\[
	n_T(x) = \int_{Tt_1}^{T t_2} \T{u} \varphi_T(x) \dd{u}.
\]
Obviously we have
\begin{multline}
	L(t_1,t_2,T) := \ev{}  \rbr{\ddp{X_T(t_2)}{\varphi} - \ddp{X_T(t_1)}{\varphi}}^2 = \ev{}  \rbr{ \int_{Tt_1}^{Tt_2} \ddp{N_{u}-\lambda}{\varphi_T} \dd{u}  }^2 \\= 
	\ev{}  \rbr{ \int_{T t_1}^{Tt_2} \ddp{N_{u}}{\varphi_T} \dd{u}  - (Tt_2- Tt_1)\ddp{\lambda}{\varphi_T} }^2 \\
	= \ev{}  \rbr{ \int_{Tt_1}^{Tt_2} \ddp{N_{u}}{\varphi_T}\dd{u}}^2   - 2 (Tt_2- Tt_1)\ddp{\lambda}{\varphi_T} \ev{}  \rbr{ \int_{Tt_1}^{Tt_2} \ddp{N_{u}}{\varphi_T}} \dd{u}  +\rbr{(Tt_2- Tt_1)\ddp{\lambda}{\varphi_T}}^2. \label{eq:l1}
\end{multline}
It is easy to prove that $\lambda$ is the intensity measure of $N_u$ (which follows by the fact that $\lambda$ is the invariant measure of the $\alpha$-stable motion). Using the Fubini theorem and equation \eqref{eq:intensity}  we obtain $\ev{}  \rbr{ \int_{Tt_1}^{Tt_2} \ddp{N_{u}}{\varphi_T} \dd{u} } =    \int_{Tt_1}^{Tt_2} \ev{} \ddp{N_{u}}{\varphi_T} \dd{u} = (Tt_2 - Tt_1) \ddp{\lambda}{\varphi_T}$. Therefore we have
\begin{equation}
	L(t_1,t_2,T) := \ev{}  \rbr{ \int_{Tt_1}^{Tt_2} \ddp{N_{u}}{\varphi_T}\dd{u} }^2   -\rbr{(Tt- Tt)\ddp{\lambda}{\varphi_T}}^2. \label{eq:defL}
\end{equation}
Let us deal with the first term; we first condition with respect to the starting measure $M$
\begin{multline*}
	L_1(t_1,t_2,T) :=   \ev{}  \rbr{ \int_{Tt_1}^{Tt_2} \ddp{N_{u}}{\varphi_T}\dd{u} }^2 = \ev{}  \ev{} \rbr{\left. \rbr{ \int_{Tt_1}^{Tt_2} \ddp{N_{u}}{\varphi_T}\dd{u} }^2\right| M} \\= \ev{}  \ev{} \rbr{\left. \rbr{ \sum_{x\in [M]}\int_{Tt_1}^{Tt_2} \ddp{N^x_{u}}{\varphi_T}\dd{u} }^2\right| M},
\end{multline*}
where by $N^x$ we denote a system starting from $\delta_x$. Let us calculate (slightly abusing notation we use $\ev{}$ also to $\ev{}(\cdot|M)$)
\begin{multline*}
	L_1(t_1,t_2,T) =  \ev{} \sum_{x\in [M]} \ev{} \rbr{ \int_{Tt_1}^{Tt_2} \ddp{N^x_{u}}{\varphi_T}\dd{u} }^2 \\
	+ \ev{} \sum_{x\in [M],y\in [M],\: x\neq y} \ev{} \rbr{ \int_{Tt_1}^{Tt_2} \ddp{N^x_{u}}{\varphi_T}\dd{u} }\rbr{ \int_{Tt_1}^{Tt_2} \ddp{N^y_{u}}{\varphi_T}\dd{u} }.
\end{multline*}
The systems $N^x$ and $N^y$ are independent as long as $x\neq y$, therefore
\[
	\ev{} \rbr{ \int_{Tt_1}^{Tt_2} \ddp{N^x_{u}}{\varphi_T}\dd{u} }\rbr{ \int_{Tt_1}^{Tt_2} \ddp{N^y_{u}}{\varphi_T}\dd{u} }= \ev{} \rbr{ \int_{Tt_1}^{Tt_2} \ddp{N^x_{u}}{\varphi_T}\dd{u} }\ev{} \rbr{ \int_{Tt_1}^{Tt_2} \ddp{N^y_{u}}{\varphi_T}\dd{u} } = (*).
\]
We know that $\ev{} \ddp{N^y_{u}}{\varphi_T} = \T{u}\varphi_T(y)$ (this is standard and can be found e.g. in \cite{Gorostiza:1991aa}). Due to it
\[
	(*) = \rbr{\int_{Tt_1}^{Tt_2} \T{u}\varphi_T(x) \dd{u} } \rbr{\int_{Tt_1}^{Tt_2} \T{u}\varphi_T(y) \dd{u} } = n_T(x) n_T(y)
\]
Now the second summand of $L_1$ can be recalculated as (by assumption (A1) this expression is finite)
\[
	 \ev{} \sum_{x\in [M],y\in [M]} n_T(x) n_T(y) - \sum_{x\in [M]} \rbr{n_T(x) }^2 = \int_{\mathcal{M}(\Rd)} \ddp{\mu}{n_T(x)}^2 - \ddp{\mu}{n_T^2(x)} M(\dd{\mu}).
\]
By Lemma \ref{lem:ippdSecondMoment} and equation \eqref{eq:intensity} this is equal to 
\[
	\int_{\mathcal{M}(\Rd)} \ddp{\mu}{n_T(x)}^2  Q(\dd{\mu}) + \rbr{\intr n_T(x) \dd{x}  }^2- \intr n_T^2(x) \dd{x}.
\]
We also use \eqref{eq:secondmome} with $\Psi(x,s) = \varphi_T(x) 1_{[T t_1, T t_2]}(s)$ and \eqref{eq:intensity} again in order to get
\[
 L_1(t_1,t_2,T) := \intr -v''(0)(x,0,T) \dd{x} +\int_{\mathcal{M}(\Rd)} \ddp{\mu}{n_T(x)}^2  Q(\dd{\mu}) + \rbr{\intr n_T(x) \dd{x}  }^2- \intr n_T^2(x) \dd{x}. 
\]
Let us notice that $\intr n_T(x) \dd{x} = (Tt_1 - Tt_2) \ddp{\lambda}{\varphi_T}$, hence $L$ given by \eqref{eq:defL} writes as
\begin{equation}
	L(t_1,t_2,T) := \underbrace{\int_{\mathcal{M}(\Rd)} \ddp{n_T}{\mu}^2  Q(\dd{\mu})}_{J_1(T)} - \underbrace{\intr n_T^2(x) \dd{x}}_{J_2(T)} + \underbrace{\intr -v''(0)(x,0,T) \dd{x}}_{J_3(T)}. \label{eq:l2}
\end{equation}
We start with a simple case of $J_2$. We have
\[
	J_2(T) = \intr \rbr{\int_{T t_1}^{T t_2} \T{s}\varphi_T(x) \dd{s}  }^2 \dd{x} = \int_{T t_1}^{T t_2} \int_{T t_1}^{T t_2}  \intr  \T{s_1}\varphi_T(x) \T{s_2}\varphi_T(x) \dd{x} \dd{s_1} \dd{s_2}.    
\]
Changing variables $s_1 \rightarrow T s_1 $ and $s_2 \rightarrow T s_2$ and using \eqref{def:notation-T} we get
\[
	J_2(T) = T^2 F_T^{-2} \int_{t_1}^{t_2}\int_{t_1}^{t_2}  \intr  \T{Ts_1}\varphi(x) \T{s_2}\varphi(x) \dd{x} \dd{s_1} \dd{s_2}.
\]
Applying the Fourier transform \eqref{eq:fourier.} we obtain 
\[
	J_2(T) =  T^2 F_T^{-2} \int_{t_1}^{t_2}\int_{t_1}^{t_2} \intr  e^{-Ts_1|z|^\alpha} e^{-Ts_2|z|^\alpha} |\widehat{\varphi}(z)|^2 \dd{z} \dd{s_1} \dd{s_2}.
\]
So
\[
	J_2(T) \leq T^2 F_T^{-2} \intr |\widehat{\varphi}(z)|^2  \rbr{\frac{1}{T|z|^\alpha}\rbr{e^{-Tt_2|z|^\alpha} - e^{-Tt_1|z|^\alpha}} }^2  \dd{z}.
\]
Further
\[
	J_2(T) \leq T^2 F_T^{-2} \intr |\widehat{\varphi}(z)|^2  \rbr{\frac{1}{T|z|^\alpha}\rbr{e^{-T(t-s)|z|^\alpha} - 1} }^2  \dd{z}.
\]
We use inequality \eqref{eq:ineq-exp} with $\delta = (3-d/\alpha)/2$
\[
	J_2(T) \leq T^2 F_T^{-2} \intr |\widehat{\varphi}(z)|^2  \rbr{\frac{1}{T|z|^\alpha}\rbr{ T (t-s) |z|^\alpha}^{(3-d/\alpha)/2} }^2  \dd{z},
\]
\[
	J_2(T) \leq (t-s)^{3-d/\alpha} \intr \frac{|\widehat{\varphi}(z)|^2}{|z|^{d-\alpha}}   \dd{z} \cleq  (t-s)^{3-d/\alpha}.
\]
Now we deal with  $J_1$. Using the Palm formula \eqref{eq:palmFormula} we obtain
\[
	J_1(T) = \intr n_T(x) \int_{\mathcal{M}(\Rd)} \ddp{ n_T }{\mu}  Q_x(\dd{\mu}).
\]
Using the definition of $n_T$ we write
\[
	J_1(T) = \intr \int_{T t_1}^{T t_2} \T{s_1} \varphi_T(x)  \dd{s_1}  \int_{\mathcal{M}(\Rd)} \ddp{\int_{T t_1}^{T t_2} \T{s_2} \varphi_T(x)  \dd{s_2} }{\mu}  Q_x(\dd{\mu}) \dd{x}.
\]
Now we apply the Fubini theorem and recall \eqref{def:notation-T} to get
\[
	J_1(T) = F_T^{-2} \int_{T t_1}^{T t_2} \int_{T t_1}^{T t_2} \intr  \T{s_1} \varphi(x) \int_{\mathcal{M}(\Rd)} \ddp{\T{s_2} \varphi(x)}{\mu}  Q_x(\dd{\mu}) \dd{x} \dd{s_1} \dd{s_2}.
\]
Using Lemma \ref{lem:derivation}  with $f_1=\varphi, f_2=\varphi$ and $F^*(z_1)=\int_{\Rd}\varphi(-z_1+z_2) \varphi(z_2) \dd{z}_2 $ we get
\begin{equation}
	J_1(T) = F_T^{-2} \int_{T t_1}^{T t_2} \int_{T t_1}^{T t_2}   \int_{\mathcal{M}(\Rd)} \ddp{\T{s_1+s_2} F^*(x)}{\mu}  Q_0(\dd{\mu}) \dd{s_1} \dd{s_2}. \label{eq:l3}
\end{equation}
Now by Lemma \ref{lem:a2reformulations} we have (we recall that $h=(3-d/\alpha)$)
\begin{multline*}
	J_1(T) \cleq F_T^{-2} \int_{T t_1}^{T t_2} \int_{T t_1}^{T t_2} (s_1+s_2)^{1-d/\alpha} \dd{s_1} \dd{s_2} \\
	=F_T^{-2}\rbr{(Tt_2 +T t_2)^{h} - (Tt_1 +T t_2)^{h} -(Tt_2 +T t_1)^{h} +(Tt_1 +T t_1)^{h}}\\
	=(2 t_2)^{h} - 2(t_1 +t_2)^{h} +(2 t_1)^{h} \cleq (t_2-t_1)^2.
\end{multline*}
Finally, we deal with $J_3$. Using equation \eqref{eq:vpp} we write
\begin{multline*}
	J_3(T) = 2 \intr  \int_{0}^{T}\mathcal{T}_{T-s}\left[  \Psi(\cdot,T-s) n_\Psi(\cdot, T-s,s) + V n_\Psi(\cdot, T-s,s)^2  \right] \dd{s} \dd{x} \\
	\ceq  \underbrace{\intr  \int_{0}^{T} \Psi(x,T-s) n_{\Psi}(x, T-s,s)  \dd{s} \dd{x}}_{J_{31}(T)} + \underbrace{\intr  \int_{0}^{T}  n_{\Psi}(x, T-s,s)^2   \dd{s} \dd{x}}_{J_{32}(T)}.
\end{multline*}
We start with $J_{31}$, let us recall that $\Psi(x,s) = \varphi_T(x) 1_{[T t_1, T t_2]}(s)$), substitute $s\rightarrow sT$ and $u\rightarrow Tu$  and apply \eqref{def:notation-T}
\begin{multline*}
J_{31}(T) = \intr  \int_{0}^{T} \int_{0}^{s} \varphi_T(x) 1_{[T t_1, T t_2]}(s) \T{s-u} \varphi_T(x) 1_{[T t_1, T t_2]}(T-u) \dd{u}   \dd{s} \dd{x} =\\
 	\frac{T^2}{F_T^2} \intr  \int_{0}^{1} \int_{0}^{s} \varphi(x) 1_{[t_1,t_2]}(s) \T{T(s-u)} \varphi(x) 1_{[t_1, t_2]}(1-u) \dd{u}   \dd{s} \dd{x}.
\end{multline*}
Applying the Fourier transform \eqref{eq:fourier.} we get
\[
	J_{31}(T) = \frac{T^2}{F_T^2} \intr  \int_{0}^{1} \int_{0}^{s} |\widehat{\varphi}(z)|^2 1_{[t_1, t_2]}(s) e^{-T(s-u)|z|^\alpha} 1_{[t_1, t_2]}(1-u) \dd{u}   \dd{s} \dd{z}.
\]
We may easily estimate
\begin{multline*}
	J_{31}(T) \cleq \frac{T}{F_T^2} \intr  \int_{0}^{1}  \frac{|\widehat{\varphi}(z)|^2}{|z|^\alpha} 1_{[t_1, t_2]}(s) e^{-T(1 - t_2)|z|^\alpha} \rbr{1 - e^{-T(t_2-t_1)|z|^\alpha}}    \dd{s} \dd{z} \\ \cleq (t_2-t_1) \frac{T}{F_T^2} \intr \frac{|\widehat{\varphi}(z)|^2}{|z|^\alpha} \rbr{1 - e^{-T(t_2-t_1)|z|^\alpha}}  \dd{z},
\end{multline*}
we use inequality \eqref{eq:ineq-exp} in order to get
\[
		J_{31}(T) \cleq (t_2-t_1)^{1+\delta} \frac{T}{F_T^2} T^{\delta} \intr \frac{|\widehat{\varphi}(z)|^2}{|z|^{\alpha-\delta}}\dd{z} \cleq (t_2-t_1)^{1+\delta} 
\]
as one can easily find a proper $\delta>0$. Let us now proceed to $J_{32}$. Using \eqref{sol:tv} and changing variables $s\rightarrow Ts$, $u\rightarrow Tu$
\begin{multline*}
J_{32}(T) = \intr  \int_{0}^{T} \rbr{\intc{s} \T{s-u} \varphi_T(x) 1_{[T t_1, T t_2]}(T-u) \dd{u}  }^2    \dd{s} \dd{x} \\\ceq \frac{T^3}{F_T^2} \intr  \int_{0}^{1} \rbr{\intc{s} \T{T(s-u)} \varphi(x) 1_{[t_1, t_2]}(1-u) \dd{u}  }^2    \dd{s} \dd{x}.
\end{multline*}
Using equation \eqref{eq:fourier.} we obtain
\[
	J_{32}(T) \ceq \frac{T^3}{F_T^2} \intr  \int_{0}^{1} |\widehat{\varphi}(z)|^2 \rbr{\intc{s} e^{-T(s-u)|z|^\alpha}1_{[t_1, t_2]}(1-u)  \dd{u}  }^2    \dd{s} \dd{x}.
\]
Next, we write
\[
	J_{32}(T) \cleq \frac{T^3}{F_T^2} \int_{0}^{1}\intc{s} \intc{s} \intr   e^{-T(2s-u_1-u_2)|z|^\alpha} 1_{[t_1, t_2]}(1-u_1) 1_{[t_1, t_2]}(1-u_2)  \dd{u_1}\dd{u_2} \dd{s} \dd{z}.
\]
Integrating with respect to $z$ yields
\[
	J_{32}(T) \cleq \int_{0}^{1}\intc{s} \intc{s} ((2s - u_1 - u_2))^{-d/\alpha} 1_{[t_1, t_2]}(1-u_1) 1_{[t_1, t_2]}(1-u_2)  \dd{u_1}\dd{u_2} \dd{s},
\]
Further
\begin{multline*}
	J_{32}(T) \cleq \int_{1-t_2}^{1-t_1} \int_{1-t_2}^{u_1} \int_{u_1}^{1} (2s - u_1 - u_2)^{-d/\alpha}   \dd{s} \dd{u_1}\dd{u_2} \ceq \\	 \int_{1-t_2}^{1-t_1} \int_{1-t_2}^{u_1}  (2 - u_1 - u_2)^{-d/\alpha+1}   \dd{u_2}\dd{u_1} -	\int_{1-t_2}^{1-t_1} \int_{1-t_2}^{u_1}  ( u_1 - u_2)^{-d/\alpha+1}  \dd{u_2}\dd{u_1}.
\end{multline*}
Let us denote $\delta = t_2 - t_1$ and write
\[
	f(\delta) := \frac{1}{\delta^{3-d/\alpha}} \int_{1-t_2}^{1-t_2+\delta} \int_{1-t_2}^{u_1}  (2 - u_1  - u_2)^{-d/\alpha+1}   \dd{u_1}\dd{u_2}.
\]
It is easy to check that by l'Hôpital's rule we have
\[
	\lim_{\delta\rightarrow 0^+} f(\delta) \ceq \lim_{\delta\rightarrow 0^+} \frac{1}{\delta^{2-d/\alpha}}  \rbr{ \int_{1-t_2}^{1-t_2+\delta}  (1+t_2-\delta  - u_2)^{-d/\alpha+1}   \dd{u_2} } \leq c.
\]
Hence we get  $\int_{t_1}^{t_1+\delta} \int_{t_1}^{u_1}  (2 - u_1  - u_2)^{-d/\alpha+1}   \dd{u_1}\dd{u_2} \cleq (t_2 - t_1)^{3-d/\alpha} $. The other term is estimated by
\[
		\int_{t_1}^{t_2} \int_{t_1}^{u_1}  ( u_1 - u_2)^{1-d/\alpha}  \dd{u_2}\dd{u_1} \ceq \int_{t_1}^{t_2}    ( u_1 - t_1)^{2-d/\alpha} \dd{u_1} \ceq ( t_2 - t_1)^{3-d/\alpha}.
\]

\section{Equilibrium measure and its Palm measure} 
\label{sec:equlibrium}
Let us recall the description of clan decomposition of the equilibrium measure given in Remark \ref{rem:examples}. In this section we will compute the Laplace transform and the first moment of $\xi^x_\infty$, see equation \eqref{eq:clan}. To this end we first consider the following Laplace transform 
\begin{equation}
	w(x,t) = 1-\lap{-\ddp{N_t^x}{f}},	 \label{eq:oneParticleLaplace}
\end{equation}
where $N^x$ is the branching particle system  with starting condition $N_0 = \delta_x$ and $f:\Rd\mapsto \R_+$ is some measurable function. It is known \cite[(2.3)]{Gorostiza:1991aa} that
\begin{equation}
	 w(x,t) = \T{t}(1-e^{-f(\cdot)})(x) - \frac{V}{2}\intc{t} \T{t-s} w(x,s)^2 \dd{s}.\label{eq:oneParticleEquation}
\end{equation}
Using this equation we will now calculate the Laplace transform of $\xi_t$.  Let us denote by $Poiss$ a realisation of $\nu$
\begin{equation}
	H(t,x) := \lap{-\ddp{\xi^x_t}{f}} = \ev{} \ev{ \sbr{\left.  \rbr{\exp\cbr{-\ddp{\xi^x_t}{f}}}\right| Poiss,X^x}}. \label{eq:clanLaplaceDef}
\end{equation}
Further, let $t_1, t_2,\ldots, t_n $ be the times when $Poiss$ ticked ($n$ is an a.s. finite random variable ). We have
\[
	H(t,x) = \ev{} \ev{ \sbr{\left.  \rbr{\exp\cbr{-\ddp{\sum_{i=1}^n \zeta_{t_i}^{t_i,X_{t_i}^x} }{f} }}\right| Poiss,X^x}} = \ev{} \prod_{i=1}^n \ev{ \sbr{\left.  \rbr{\exp\cbr{-\ddp{ \zeta_{t_i}^{t_i,X_{t_i}^x} }{f} }}\right| Poiss,X^x}}.
\] 
Utilising \eqref{eq:oneParticleLaplace} and the definition \eqref{eq:semigroup-density} we arrive at
\[
	H(t,x) = \ev{} \rbr{\prod_{i=1}^n \ev{}\rbr{\left. (1-w(X_{t_i}, t_i)) \right| Poiss, X^x}} = \ev{} \rbr{\prod_{i=1}^n  \T{t_i} (1-w(x, t_i))}.
\]
Using the properties of the Poisson random fields we obtain
\begin{equation}
		H(t,x) = \exp \cbr{ - V \intc{t} \T{s} w(x,s) \dd{s}  }. \label{eq:clan-measure}
\end{equation}
\paragraph{Expectation of $\xi^x_\infty$} 
\label{sub:moments_of_xi_x_}
We denote $w(x,t,\theta)$ given by \eqref{eq:oneParticleLaplace} defined for $\theta f$. Obviously, it fulfils an analogue of \eqref{eq:oneParticleEquation}. We easily check that
\[
	w'(x,t,\theta) =  \T{t}( f(\cdot)e^{-\theta f(\cdot)})(x) -  V \intc{t} \T{t-s} \sbr{w'(\cdot,s,\theta)w(\cdot,s,\theta)}(x) \dd{s},
\]
\[
	w'(x,t,0) =  \T{t} f(x).
\]
We can calculate the moments of $\ddp{\xi_t^x}{f}$ by differentiating $H(x,t,\theta)$ which is an analogue of \eqref{eq:clanLaplaceDef} defined for $\theta f$. In fact we may write
\[
H(t,x,\theta) :=  \exp \cbr{- V {\intc{t} \T{s}w(x,s,\theta) \dd{s}  }}.
\]
Differentiation with respect to $\theta$ yields
\[
	H'(t,x,\theta) =  \rbr{-V{\intc{t} \T{s}w'(x,s,\theta) \dd{s}}}  \exp \cbr{- V{\intc{t} \T{s}w(x,s,\theta) \dd{s}  }}.
\]
which evaluated at $\theta = 0$ is
\[
	H'(t,x,0) = -  V\intc{t} \T{2s} f(x) \dd{s}.
\]
Using the properties of the Lebesgue transform we read
\[
	\ev{\ddp{\xi_t^x}{f}} = V \intc{t} \T{2s} f(x) \dd{s}
\]
Applying the monotone convergence theorem and using the fact that system is persistent (see \cite{Gorostiza:1991aa}) obtain
\begin{equation}
	\ev{\ddp{\xi_\infty^x}{f}} = \frac{V}{2} \intc{+\infty} \T{s} f(x) \dd{s} =\frac{V c_{\alpha,d}}{2}   \intr \frac{f(y)}{|x-y|^{d-\alpha} } \dd{y}, \label{eq:equlibrium_intensity}
\end{equation}
where $c_{\alpha,d} = \Gamma\rbr{\frac{d-\alpha}{2}} (2^\alpha \pi^{d/2} \Gamma(\alpha/2))^{-1}$ (see \cite[(1.32)]{Bogdan:2009le}).

Now, we would like to calculate $\intr p_1(y) \frac{1}{|y|^{d-\alpha}} \dd{y} $. Let us denote
\[
	F(x) = \intr p_1(y) \frac{1}{|x-y|^{d-\alpha}} \dd{y}. 
\]
Using \eqref{eq:fourier.} and \eqref{eq:equlibrium_intensity} we easily obtain
\[
	\widehat{F}(z) = c_{\alpha,d}^{-1} e^{-|z|^\alpha} \frac{1}{|z|^\alpha}.
\]
Using the inverse transform we conclude
\[
	F(0) = \frac{c_{\alpha,d}^{-1}}{(2\pi)^d} \intr \widehat{F}(z) \dd{z} = \frac{c_{\alpha,d}^{-1}}{(2\pi)^d} \intr e^{-|z|^\alpha} \frac{c_{\alpha,d}^{-1}}{|z|^\alpha} \dd{z} = \frac{2}{(2\pi)^d} \frac{\pi^{d/2}}{\Gamma(d/2)} \inti e^{-r^\alpha} \frac{1}{r^\alpha} r^{d-1} \dd{r}.
\]
Now we substitute $r \rightarrow  s^{1/\alpha}$ getting
\begin{multline}
F(0) = \frac{2 c_{\alpha,d}^{-1} c_{\alpha,d}^{-1}}{(2\pi)^d} \frac{\pi^{d/2}}{\Gamma(d/2)\alpha} \inti e^{-s} \frac{1}{s} s^{(d-1)/\alpha} s^{1/\alpha-1} \dd{s} \\= \frac{2c_{\alpha,d}^{-1}}{(2\pi)^d} \frac{\pi^{d/2}}{\Gamma(d/2)\alpha} \inti e^{-s} s^{d/\alpha -2}  \dd{s} = \frac{2c_{\alpha,d}^{-1}}{(2\pi)^d} \frac{\pi^{d/2}}{\Gamma(d/2)\alpha} \Gamma(d/\alpha -1)	 \label{eq:tmpIntegral}
\end{multline}

\subsection*{Appendix} 
\label{sub:subsection_name}
\begin{proof}(of Fact \ref{fact:measure})
	Let us first transform \eqref{eq:cond1} using \eqref{eq:self-similar}
	\[
		f(t):=t^{d/\alpha - 1}\ddp{p_t}{\Lambda_0} = t^{d/\alpha - 1} \intr p_t(x) \Lambda_0(\dd{x} ) =   t^{- 1} \intr p_1( t^{-1/\alpha} x) \Lambda_0(\dd{x} ).
	\]
	Let us denote $F(t) = \Lambda_0(B(t))$. The density $p_t$  of the symmetric $\alpha$-stable process is rotationally symmetric hence we may denote $p(|x|):=p(x)$. It is also know that it is non-increasing. We also denote $a_{t,i} =\frac{i}{n} t^{1/\alpha} $. It is straightforward to check that
	\begin{equation}
		t^{-1}\sum_{i=0}^\infty p_{1}\rbr{ (i+1)/n } \rbr{F\rbr{ a_{t,i+1}} - F\rbr{a_{t,i}}} \leq f(t) \leq t^{-1} \sum_{i=0}^\infty p_{1}\rbr{ i/n } \rbr{F\rbr{a_{t,i+1}} - F\rbr{a_{t,i}}}. \label{eq:tmp11}
	\end{equation}
	Denote the right most part of the last expression by $\overline{D}_{t,n}$. It is easy to check that $ t^{-1}  \rbr{F\rbr{a_{t,i+1}} - F\rbr{a_{t,i}}} \rightarrow \mathcal{H}(\Lambda) \rbr{\rbr{\frac{i+1}{n}}^\alpha - \rbr{\frac{i}{n}}^\alpha}$ and $|t^{-1} \rbr{F\rbr{a_{t,i+1}} - F\rbr{a_{t,i}}}| \cleq i^{\alpha - \epsilon}$. Using the dominated Lebesgue convergence theorem we have
	\[
		\overline{D}_{n} := \lim_{t\rightarrow +\infty} \overline{D}_{t,n} =  \mathcal{H}(\Lambda) \sum_{i=0}^\infty p_{1}\rbr{ \frac{i}{n}} \rbr{\rbr{\frac{i+1}{n}}^\alpha - \rbr{\frac{i}{n}}^\alpha  }.
	\]
	We use inequalities  $\frac{\alpha}{n} \rbr{\frac{i}{n}}^{\alpha -1}\leq \rbr{\frac{i+1}{n}}^\alpha - \rbr{\frac{i}{n}}^\alpha \leq \frac{\alpha}{n} \rbr{\frac{i+1}{n}}^{\alpha -1}$ for $\alpha>1$ (for $0<\alpha <1$ one have to reverse inequality signs) in order to prove 
	\[
		\lim_{n\rightarrow +\infty} \overline{D}_n = \mathcal{H}(\Lambda) \alpha \inti p_1(x) x^{\alpha -1} \dd{x}.  
	\]
	Using the same technique we proceed with ``the left inequality'' in \eqref{eq:tmp11}.
\end{proof}


\subsection*{Acknowledgements} The author would like to thank Prof. T. Bojdecki for introducing him to the problem and Dr. A. Talarczyk for fruitful discussions. 

\bibliographystyle{abbrv} 
\bibliography{bibliography}

\end{document}